\documentclass[11pt,letter]{amsart}
\usepackage{amsfonts,amssymb,amsthm,amsmath,amsxtra,amscd,verbatim,eucal}
\usepackage[all]{xy}
\usepackage[dvips]{graphics}
\usepackage{graphicx,subfig}



\setlength{\textwidth}{5.9 in}
\setlength{\topmargin} {0 in}        
\setlength{\evensidemargin}{.3 in}
\setlength{\oddsidemargin}{.3 in}
\setlength{\footskip}{.3 in}
\setlength{\headheight}{.3 in}
\setlength{\textheight}{8.3 in}

\hfuzz50pc  
\vfuzz50pc
\sloppy

\setcounter{section}{0}



\theoremstyle{plain}
\newtheorem{thm}{Theorem}[section]
\newtheorem{lem}[thm]{Lemma}
\newtheorem{prop}[thm]{Proposition}
\newtheorem{cor}[thm]{Corollary}
\newtheorem{conj}[thm]{Conjecture}

\theoremstyle{definition}
\newtheorem{defi}[thm]{Definition}

\theoremstyle{remark}

\newtheorem{rmk}[thm]{Remark}



\def\C{{\mathbb C}}
\def\A{{\mathbb A}}
\def\R{{\mathbb R}}

\def\P{{\mathbb P}}

\def\E{\mathcal{E}}

\def\X{\mathfrak{X}}

\def\a{\alpha}
\def\b{\beta}
\def\g{\gamma}
\def\d{\delta}

\def\ff{\psi}

\def\ep{\epsilon}

\def\l{\lambda}

\def\om{\omega}
\def\p{\pi}

\def\D{\Delta}
\def\G{\Gamma}

\def\S{\Sigma}

\def\.{\cdot}
\def\~{\widetilde}
\def\^{\widehat}

\def\o{\circ}
\def\ov{\overline}
\def\rat{\dashrightarrow}

\def\de{\partial}

\renewcommand{\and}{ \ \, \text{and} \ \, }

\newcommand{\fall}{ \quad \text{for all} \ \, }

\DeclareMathOperator{\NE} {NE}
\DeclareMathOperator{\CNE} {\ov{\NE}}

\DeclareMathOperator{\Pic} {Pic}

\DeclareMathOperator{\mult} {mult}

\DeclareMathOperator{\Div} {Div}

\DeclareMathOperator{\Nef} {Nef}


\title{On the Mori cone of blow-ups of the plane}

\author{Tommaso de Fernex}
\address{Department of Mathematics, University of Utah, 155 South 1400 East,
Salt Lake City, UT 48112-0090, USA}
\email{defernex@math.utah.edu}

\thanks{The author was partially supported by NSF CAREER Grant DMS-0847059.}

\subjclass[2000]{Primary 14E30; Secondary 14J26}
\keywords{Very general points, rational surfaces, Mori cone, Nagata conjecture.}

\thanks{Version of October 23, 2009.}

\begin{document}

\begin{abstract}
We discuss some properties of the extremal rays of the cone of effective curves of
surfaces that are obtained by blowing up $\P^2$ at points in very general position.
The main motivation is to rectify an incorrect interpretation, in terms of 
the geometry of this cone, of the Segre--Harbourne--Gimigliano--Hirschowitz 
conjecture. Even though the arguments are based on elementary computations, 
the point of view leads to observe some other properties of the Mori cone, 
among which the fact that in the case of ten or more points
the cone is not countably generated. 
\end{abstract}

\maketitle

\section{Introduction}

There are two important open problems
on plane curves with assigned multiplicities at very general points
$x_1,\dots,x_r \in \P^2$ (stated here in progressive order of difficulty).
The first is a conjecture of Nagata \cite{Nag} saying that if $r \ge 10$, then
$$
\deg(D) \ge \frac 1{\sqrt r} \sum_{i=1}^r \mult_{x_i}(D)
$$
for every effective divisor $D$ in $\P^2$.\footnote{The original
formulation of Nagata only involves uniform conditions on the multiplicities
at the chosen points, does not assume that $D$ is irreducible, 
and predicts that the strict inequality holds.
It is however well-known, and in any case easy to see,
that the two formulations are equivalent.}
The second conjecture says that the stronger bound
\begin{equation}\label{eq:second-conj}\tag{$\dag$}
\deg(D)^2 \ge \sum_{i=1}^r \mult_{x_i}(D)^2.
\end{equation}
holds for every nonrational integral curve $D$ in $\P^2$.

These conjectures can be reformulated in terms of the geometry of the
Mori cone of the blow-up of $\P^2$ at the points $x_1,\dots,x_r$.
In general, if $X$ is a complex projective manifold,
the Mori cone $\CNE(X)$ of $X$ is defined as the closure of the convex cone
in $N_1(X)$ spanned by the numerical classes of effective curves.
When the canonical class $K = K_X$ of $X$ is not numerically trivial,
Mori's Cone Theorem (e.g., see \cite{KM}) states that the cone $\CNE(X)$
is generated by its {\it $K$-positive part} $\CNE(X) |_{K^{\ge 0}}$
and its {\it $K$-negative extremal rays} $R_i$:
$$
\CNE(X) = \CNE(X) |_{K^{\ge 0}}  + \sum R_i
$$
(here $K^{\ge 0}$ denotes the set of classes $\a \in N_1(X)$ with $K\.\a \ge 0$).
Every ray $R_i$ is associated to an extremal contraction,
namely, a proper morphism $X \to Y_i$ with connected fibers
that contracts precisely those curves on $X$ whose classes lie on $R_i$.
While the $K$-negative side of $\CNE(X)$ is related via Mori theory to
the geometry of $X$, little is known about the properties of
the $K$-positive side of the cone.

In this regard,
the special situation where $X$ is the blow-up of $\P^2$ at a set of very general points
offers a very interesting case of investigation.
In light of the equivalent conjectures of
Segre, Harbourne, Gimigliano and Hirschowitz \cite{Seg,Har,Gim,Hir89}
(henceforth SHGH Conjecture), it is expected that if
$X$ is the blow-up of $\P^2$ at a set $\S$ of $r \ge 10$ points in very general position,
then some portion of the positive part of the boundary of the Mori cone of $X$
is ``circular'' (that is, is supported upon a spherical cone).

More precisely, if $Q \subset N_1(X)$ denotes the quadric cone
consisting of classes $\a$ with $\a^2 \ge 0$ and $\a\.h \ge 0$
for some ample class $h$, then the SHGH Conjecture
has the following implication in terms of the geometry of the Mori cone of $X$.

\begin{conj}[$(-1)$-Curves Conjecture]
If $X$ is the blow-up of $\P^2$ at $r$ very general points, then
$$
\CNE(X) = Q + \sum R_i,
$$
where the sum is taken over all $K$-negative extremal rays $R_i$
of $\CNE(X)$.
\end{conj}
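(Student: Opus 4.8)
The plan is to derive the statement from the SHGH Conjecture, extracting from it the single geometric input that drives everything: \emph{on $X$ every irreducible curve of negative self-intersection is a $(-1)$-curve}. Indeed, if $D\subset\P^2$ is integral and nonrational then \eqref{eq:second-conj} says exactly that the class of its strict transform has nonnegative self-intersection; and the SHGH dimension count rules out irreducible rational curves of self-intersection $\le-2$ at very general points, since the corresponding linear systems have negative expected dimension and are therefore empty. Hence the only irreducible classes $\a$ with $\a^2<0$ are classes of $(-1)$-curves. By the classification of extremal contractions of a smooth surface, a $K$-negative extremal ray is then either spanned by such a $(-1)$-curve or is the class of a conic-bundle fibre $F$ with $F^2=0$, the latter lying on $\partial Q\subset Q$. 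So it suffices to prove $\CNE(X)=Q+\sum R_i$ with the $R_i$ running over the rays spanned by $(-1)$-curves.

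For the inclusion $Q+\sum R_i\subseteq\CNE(X)$, each $R_i$ is spanned by an effective curve and so lies in $\CNE(X)$ automatically, while for $Q$ I would run the standard Riemann--Roch argument: if $\a^2>0$ and $\a\.h>0$ for an ample $h$, then $\chi(\OO_X(m\a))\sim m^2\a^2/2\to+\infty$ and $h^2(m\a)=h^0(K-m\a)=0$ for $m\gg0$, so $m\a$ is effective and $\a\in\CNE(X)$; taking closures yields all of $Q$. Since $\CNE(X)$ is a closed convex cone, this gives $\ov{Q+\sum R_i}\subseteq\CNE(X)$.

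For the reverse inclusion I would use that $\NE(X)$ is generated by the classes of irreducible curves. Such a class $\a$ either has $\a^2\ge0$, in which case $\a\.h>0$ forces $\a\in Q$, or has $\a^2<0$, in which case the SHGH input makes it the class of a $(-1)$-curve, so $\a\in R_i$ for some $i$. Therefore $\NE(X)\subseteq Q+\sum R_i$, and passing to closures gives $\CNE(X)\subseteq\ov{Q+\sum R_i}$.

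The two inclusions reduce the theorem to the one genuinely delicate point, which I expect to be the main obstacle: that the convex cone $Q+\sum R_i$ is already \emph{closed}, even though it is generated by $Q$ together with infinitely many rays $R_i$. The key is to control how the $(-1)$-curves accumulate. Fixing an ample $h$ and normalizing $\g_C:=[C]/(h\.C)$ for a $(-1)$-curve $C$, one has $\g_C^2=-1/(h\.C)^2$, which tends to $0^-$ as $h\.C\to\infty$; moreover, for each $N$ there are only finitely many $(-1)$-curves with $h\.C\le N$, since these are lattice points in the compact region $\{\a\.h\le N\}\cap\CNE(X)$. Hence along any sequence of distinct $(-1)$-curves $h\.C\to\infty$, so the normalized classes $\g_C$ can accumulate only on $\partial Q$. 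Slicing the cone by the affine hyperplane $\{\a\.h=1\}$ then exhibits its section as the convex hull of the compact ``ball'' $Q\cap\{\a\.h=1\}$ together with the points $\g_C$, a set whose only accumulation points already lie in that ball and which is therefore compact; its convex hull is then compact, so $Q+\sum R_i$ is closed. Combining this with the two inclusions gives $\CNE(X)=Q+\sum R_i$. This accumulation analysis is exactly the place where the infinitude of the $R_i$ (and the failure of countable generation for $r\ge10$) must be reconciled with the clean description of the cone, which is why I expect it, rather than either inclusion, to be the crux.
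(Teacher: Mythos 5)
The statement you were asked about is a \emph{conjecture}: the paper does not prove it, and no proof is known. It would imply Nagata's conjecture for $r\ge 10$ via Proposition~\ref{prop:dF}, so an unconditional proof would be a major result. Your argument is explicitly conditional on the SHGH Conjecture (equivalently, on the bound \eqref{eq:second-conj} for nonrational curves), which is itself open. So what you have written is not a proof of the statement but a proof of the implication ``SHGH $\Rightarrow$ $(-1)$-Curves Conjecture,'' or more precisely of the equivalence, asserted without proof in the paper's introduction, between the cone formula $\CNE(X)=Q+\sum R_i$ and the assertion that the only integral curves of negative self-intersection on $X$ are $(-1)$-curves. That is the genuine gap: the single input that ``drives everything'' in your plan is precisely the open conjectural content, and nothing in your argument removes it.

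Within that conditional scope, your reasoning is essentially sound and consistent with the paper's framework. The inclusion $Q\subseteq\CNE(X)$ by Riemann--Roch is standard and is stated in Section~\ref{sect:main}; the reduction of $\NE(X)$ to $Q\cup\bigcup R_i$ is exactly the equivalence the paper describes; and your closedness argument --- distinct $(-1)$-curves have distinct classes, only finitely many have bounded $h$-degree, so their normalized classes accumulate only on $\de Q$, whence the slice of $Q+\sum R_i$ is the convex hull of a compact set --- is correct and is the same phenomenon captured by Lemma~\ref{lem:dF} and its use in Lemma~\ref{thm:weak-inclusion} (where the paper shows $R(-K)+Q_\ep+\sum R_i$ is closed for every $\ep>0$). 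Two smaller remarks: the rational-curve case does not need SHGH at all, since Proposition~\ref{prop:dF} is an unconditional theorem, so the only conjectural input is \eqref{eq:second-conj} for nonrational curves; and for $r\ge 2$ one has $\rho(X)\ge 3$, so no $K$-negative extremal \emph{ray} is of conic-bundle type (those correspond to facets), making that case distinction unnecessary.
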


By Mori theory, if $r \ge 2$ then every $K$-negative extremal ray $R_i$
of $\CNE(X)$ is spanned by a $(-1)$-curve (namely, a smooth rational
curve $C_i$ with self intersection $C_i^2 = -1$).
The formula in the statement of the conjecture
is thus equivalent to saying that the $(-1)$-curves
are the only integral curves on $X$ with negative self-intersection,
hence the name of the conjecture.
A small step in this direction comes from the following fact.

\begin{prop}[\protect{\cite[Proposition~2.4]{dF}}]\label{prop:dF}
If $C$ is an integral rational curve on $X$ with negative self-intersection,
then $C$ is a $(-1)$-curve.
\end{prop}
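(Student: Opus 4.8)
The plan is to pin down the numerical class of $C$ by combining the rigidity forced by negativity with a dimension count carried out as the blown-up points vary, and then to read off from the adjunction formula that $C$ is a $(-1)$-curve. First I would dispose of the case $C=E_i$ (which is already a $(-1)$-curve) and otherwise write $[C]=dH-\sum_{i=1}^r m_iE_i$ in the standard basis of $\Pic(X)$, where $H$ is the pullback of a line, $d=\deg\pi(C)\ge 1$, and $m_i=C\cdot E_i=\mult_{x_i}\pi(C)\ge 0$. The starting observation is rigidity: since $C$ is integral and $C^2<0$, two distinct members of $|C|$ would be effective divisors meeting in $C^2<0$, forcing a common component and, as $X$ is rational so that numerical and linear equivalence agree, forcing them to coincide; hence $h^0(\OO_X(C))=1$. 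Together with $h^2(\OO_X(C))=h^0(\OO_X(K_X-C))=0$ (as $(K_X-C)\cdot H<0$), Riemann--Roch already records the quantitative content of rigidity as $C^2\le K_X\cdot C$.

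The heart of the argument is to prove $K_X\cdot C=-1$. For the inequality $-K_X\cdot C\le 1$ I would use deformation theory: $C$ is rational and rigid, hence an isolated point of the Kontsevich space $\ov{M}_0(X,[C])$, every component of which has dimension at least the expected value $-K_X\cdot[C]+\dim X-3=-K_X\cdot C-1$, so an isolated point forces $-K_X\cdot C\le 1$. For the reverse inequality I would let the points vary. Because $\Pic$ is countable and effectivity of a fixed class is a closed condition holding at our very general configuration, the class $[C]$ remains effective over the whole configuration space $B=(\P^2)^r$, and one obtains a family $\M_0$ of rational curves of class $[C]$ dominating $B$ with zero-dimensional fibers (by rigidity), so $\dim\M_0=2r$. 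Projecting $\M_0$ to the space of rational plane curves of degree $d$, which has dimension $3d-1$, the fibre over a fixed curve $D$ has dimension at most $2a+b$, where $a=\#\{i:m_i=0\}$ and $b=\#\{i:m_i=1\}$, since each $x_i$ with $m_i\ge 2$ must sit at one of the finitely many singular points of $D$. As the image is moreover confined to the locus of rational curves acquiring a point of multiplicity $\ge m_i$ for every $i$ with $m_i\ge 2$, a condition of codimension at least $\sum_{m_i\ge 2}(m_i-2)$, comparing dimensions yields $\sum_i m_i\le 3d-1$, that is $-K_X\cdot C\ge 1$.

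Combining the two bounds gives $K_X\cdot C=-1$, so adjunction reads $p_a(C)=1+\tfrac12(C^2+K_X\cdot C)=\tfrac12(C^2+1)$; since $C$ is integral we have $p_a(C)\ge 0$, whence $C^2\ge -1$, and as $C^2<0$ this forces $C^2=-1$ and $p_a(C)=0$. An integral curve of arithmetic genus $0$ is smooth and rational, so $C$ is a $(-1)$-curve. The main obstacle is the dimension estimate of the previous paragraph: controlling the codimension of the locus of rational plane curves that acquire points of multiplicity $\ge m_i$ amounts to showing that these multiplicity conditions impose the expected number of conditions, and it is exactly here that the very general position of $x_1,\dots,x_r$ enters in an essential way — note that the numerical conclusion $\sum m_i\le 3d-1$ is false for a general plane curve and genuinely requires the hypothesis on the points.
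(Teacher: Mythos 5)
The paper never proves this proposition: it is quoted from \cite[Proposition~2.4]{dF}, so your argument must be measured against the proof there, which shares your general strategy (use very generality to move $C$ in a family dominating the configuration space, then extract the numerical inequality $-K_X\cdot C\ge 1$) but closes the argument by a genuinely different device. Your preliminary reductions are fine: rigidity from $C^2<0$, and the observation that everything follows from $-K_X\cdot C\ge 1$ together with $p_a(C)\ge 0$ and $C^2\le -1$ (which already force $C^2=-1$, $K_X\cdot C=-1$, $p_a(C)=0$; your Kontsevich-space bound $-K_X\cdot C\le 1$ is correct but redundant). A secondary remark: semicontinuity of $h^0$ plus countability of $\Pic$ only gives that the class $[C]$ stays effective over all of $(\P^2)^r$; that the member of the family stays integral, rational and equimultiple is exactly what Definition~\ref{defi:very-gen-pos} is designed to supply, and you should invoke it rather than re-derive a weaker statement.

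The genuine gap is the dimension estimate you yourself flag as the main obstacle, namely that the locus $V$ of rational plane curves of degree $d$ acquiring unassigned points of multiplicity $\ge m_i$ has codimension \emph{at least} $\sum_{m_i\ge 2}(m_i-2)$ in the $(3d-1)$-dimensional family of rational curves. An expected-dimension count only bounds the codimension of such a locus from \emph{above} (conditions cut dimension by at most their number); the upper bound on $\dim V$ that you need amounts to showing the multiplicity conditions are independent. Writing $\nu\colon\P^1\to\P^2$ for the normalization, independence of these conditions on $H^0(N_\nu)$ would follow from $h^1\bigl(N_\nu(-\textstyle\sum_{i,k}q_{i,k})\bigr)=0$ with $\deg N_\nu=3d-2$, i.e.\ precisely from $\sum m_i\le 3d-1$ --- the inequality you are trying to prove. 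So the step is circular as stated. Nor can very general position of $x_1,\dots,x_r$ rescue it: $V$ is defined purely in terms of plane curves and does not see the chosen points; what very generality buys is the existence of the dominating family (which you have already spent on the equality $\dim\M_0=2r$), not a codimension bound on a Severi-type variety. The proof in \cite{dF} avoids this by differentiating the dominating family directly: a general tangent direction in the configuration space produces a nonzero section of $N_\nu$ modulo torsion vanishing along a divisor supported on $\nu^{-1}(\S)$ whose degree is controlled by the $m_i$, and the inequality then falls out of the fact that a line bundle on $\P^1$ with a nonzero section has nonnegative degree. That differentiation step is the missing idea here.
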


This property implies in particular
that the $(-1)$-Curve Conjecture is equivalent to
the conjectural bound on degree and multiplicities
given in \eqref{eq:second-conj} for nonrational curves. This also explains why
the latter implies the Nagata Conjecture.

In view of these conjectures, one might hope that in fact
the whole $K$-positive part of the cone of curve of $X$ is
circular, namely, that
\begin{equation}\label{eq:misinterpretation}\tag{$\ddag$}
\CNE(X)|_{K^{\ge 0}} = Q|_{K^{\ge 0}}.
\end{equation}
This almost seems to be a consequence of the above conjectures.
An incorrect argument was given in \cite{dF} to deduce \eqref{eq:misinterpretation}
as an equivalent formulation of the $(-1)$-Curves Conjecture.\footnote{This
misinterpretation of the $(-1)$-Curve Conjecture,
which is due to an incorrect application
of the result from \cite{CP}, does not affect the
remaining part \cite{dF} where partial
results towards the $(-1)$-Curves Conjecture are obtained.}

The first motivation of this paper is to correct such misinterpretation
of the conjecture. By taking into more careful consideration the
location of the $K$-negative extremal rays, it turns out that \eqref{eq:misinterpretation}
can only hold true when $r \le 10$. The case in which $r\le 9$ is well-understood
(the equality holds in this case),
and we will show that if $r=10$ then \eqref{eq:misinterpretation}
would indeed follow from the $(-1)$-Curves Conjecture
(and hence from the SHGH Conjecture).
In fact, in general we will see that
if the $(-1)$-Rays Conjecture is true, then for every $r \ge 10$
$$
\CNE(X) |_{(K-sL)^{\ge 0}} = Q |_{(K-sL)^{\ge 0}},
$$
where $L$ is the pull-back of a line in $\P^2$ and $s = \sqrt{r-1}-3$.
This in particular shows that, assuming the conjecture,
some portion of the boundary of $\CNE(X)$ is indeed ``circular''.
On the other hand, we will see that there is always a strict inclusion
$$
\CNE(X) |_{K^{\ge 0}} \supsetneq Q|_{K^{\ge 0}}
$$
for every $r > 10$, independently of any conjecture.

By further analyzing the geography of the $K$-negative extremal rays $R_i$,
we deduce some other properties of the Mori cone.
In general, it is easy to see that
$\CNE(X)$ is contained in the closure of the
cone $\R_{\ge 0}[-K_X] + \sum R_i$.
When $r=10$, this implies that
$$
\CNE(X)|_{K^\perp} = Q|_{K^\perp}.
$$
Even if these facts are not enough to produce examples of irrational Seshadri constants,
they imply for instance that, for any $r \ge 10$,
both the Mori cone and the nef cone of $X$ are not countably generated.
This point of view should also be useful
to see the combinatorics of the extremal facets of $\CNE(X)$
spanned by $(-1)$-curves and study the Weyl group of $X$,
which was originally investigated by Du Val in \cite{DV}.

\subsection*{Acknowledgements}

The author would like to thank
Lawrence Ein
and
Antonio Laface
for very useful conversations.

\section{Very general position}\label{sect:very-gen-pos}

Generally speaking, a point of a variety is said to be
``very general'', or ``in very general position'',
if it is chosen in the complement of the countable
union of preassigned proper closed subsets. The notion is thus
relative to such a choice of conditions.

In this paper we restrict our attention to the
2-dimensional case, and adopt the following definition
(cf. \cite[Definition~2.1]{dF}).

\begin{defi}\label{defi:very-gen-pos}
Let $S$ be a smooth projective surface. A set $\S = \{x_1,\dots,x_r\}$
of $r \ge 1$ distinct points on $S$ is said to be {\it in very general position}
if the following condition is fulfilled:
for every integral curve $D \subset S$, the element
$(D, (x_1,\dots,x_r)) \in \Div(S) \times S^r$
belongs to an irreducible flat family
$$
\{ (D_t,(x_{1,t},\dots,x_{r,t})) \mid t\in T \} \subset \Div(S) \times S^r
$$
such that $\mult_{x_i,t}(D_t)$ and $p_g(D_t)$ are independent of $t$, and
the map $\ff \colon T \to S^{(r)}$ given by $\ff(t) = \{x_{1,t},\dots,x_{r,t}\}$
is dominant.
\end{defi}

\begin{rmk}\label{rmk:very-gen-pos:analytic}
In the above definition, one can assume that $T$ is an open disk in $\C^m$
(in the analytic category), in which case the last condition is
that the image of $\ff$ is dense in the Zariski topology.
\end{rmk}

\begin{rmk}
For every smooth projective surface $S$, the locus in $S^{(r)}$ parametrizing points
in very general position according to the above definition
is the complement of the union of countably many proper closed subsets,
and if $\S$ is a set in very general position, then so is every subset of $\S$.
(see \cite[Remark~2.2]{dF}).
\end{rmk}

In this paper we are interested in the case of $\P^2$.
In general, suppose that $\S$ is any
set of possibly infinitely near points of $\P^2$,
and let $f \colon X \to \P^2$ be the corresponding blow-up of $\P^2$.
Any other reduction (i.e., proper birational morphism)
$g \colon X \to \P^2$ gives rise to a birational
transformation $g \o f^{-1} \colon \P^2 \rat \P^2$.
Note that, up to an isomorphism, $g$ is the composition
of a sequence of blow-ups centered at smooth points
(cf. \cite[Theorem~II.11]{Be}).
We define the {\it transform} of $\S$ via $g\o f^{-1}$ to be the
set $\G$ of possibly infinitely near points needed to blow up
$\P^2$ to produce $g$:
$$
\xymatrix{
& X \ar[dl]_{\text{blowing-up $\S$}} \ar[dr]^{\text{blowing-up $\G$}}\\
\P^2 \ar@{-->}[rr]^{g\o f^{-1}} && \P^2.
}
$$

\begin{prop}\label{prop:very-gen-position}
Suppose that $\S\subset \P^2$ is a set of $r$ points in very general position, and
let $f \colon X \to  \P^2$ be the blow-up along $\S$.
Suppose that $g \colon X \to \P^2$ is any other reduction,
and let $\G$ be the transform of $\S$ via $g\o f^{-1}$.
Then $\G$ is a set of $r$ distinct points of $\P^2$ in very general position.
\end{prop}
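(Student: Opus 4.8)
The plan is to establish the two assertions separately. That $\G$ consists of $r$ \emph{distinct proper} points will follow directly from Proposition~\ref{prop:dF}, while the very general position of $\G$ will be obtained by transporting, through the birational correspondence $g\o f^{-1}$, the deforming families guaranteed by the very general position of $\S$. For the first assertion, note that $g$ is, up to isomorphism, a composition of $r$ blow-ups at smooth points \cite[Theorem~II.11]{Be}, so its exceptional locus $\Ex(g)$ has exactly $r$ irreducible components, one per blow-up. Each is the image of a $\P^1$, hence a rational curve, and negative definiteness of the intersection matrix of $\Ex(g)$ forces each to have negative self-intersection; by Proposition~\ref{prop:dF} each is therefore a $(-1)$-curve $C_i$. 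These are pairwise disjoint: were $C_i\cdot C_j=k\ge 1$ for some $i\ne j$, the $2\times 2$ minor $\left(\begin{smallmatrix}-1 & k\\ k & -1\end{smallmatrix}\right)$ would have determinant $1-k^2\le 0$, violating negative definiteness. Hence $\Ex(g)=C_1\sqcup\dots\sqcup C_r$, and contracting these disjoint $(-1)$-curves exhibits $\G=\{g(C_1),\dots,g(C_r)\}$ as a set of $r$ distinct proper points of $\P^2$.

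For the very general position of $\G$ I would verify Definition~\ref{defi:very-gen-pos} directly. Fix an integral curve $D'\subset\P^2$ on the target and let $\widetilde{D}'\subset X$ be its strict transform under $g$; this is an integral curve on $X$ which, in the main case, is the strict transform under $f$ of an integral plane curve $D$ in the source, the remaining case $\widetilde{D}'=E_i$ being handled identically but more easily. The key observation is that the invariants to be controlled are governed by quantities manifestly preserved along the correspondence: the multiplicity of $D'$ at $g(C_j)$ equals the intersection number $\widetilde{D}'\cdot C_j$, while $p_g(D')=p_g(\widetilde{D}')=p_g(D)$ by birational invariance of the geometric genus. Thus, once a suitable deforming family has been produced with the expected numerical classes, constancy of the multiplicities and of $p_g$ along it will be automatic.

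It then remains to construct that family. Using that $\S$ is in very general position, I would move $(D,\S)$ in an irreducible family $\{(D_t,\S_t)\}_{t\in T}$ with $\mult_{x_{i,t}}(D_t)$ and $p_g(D_t)$ constant and with $t\mapsto\S_t$ dominant onto $(\P^2)^{(r)}$, form the associated family of blow-ups $\mathcal{X}\to T$ with strict transforms $\widetilde{D}_t$, and transport everything through $g$ by extending it to a family of reductions $g_t\colon\mathcal{X}_t\to\P^2$, setting $D'_t=(g_t)_*\widetilde{D}_t$ and $\G_t=\{g_t(C_{1,t}),\dots,g_t(C_{r,t})\}$. Granting this extension, the constancy of $\widetilde{D}_t\cdot C_{j,t}$ and of $p_g(D'_t)=p_g(D_t)$ is immediate from deformation invariance of intersection numbers and from the previous paragraph, whereas dominance of $t\mapsto\G_t$ follows because $\S\mapsto\G$ is induced by a single birational self-transformation of $(\P^2)^{(r)}$, with inverse the transform via $f\o g^{-1}$. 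The main obstacle is exactly this extension of $g$ over the family, which I expect to deduce from the rigidity and unobstructedness of $(-1)$-curves: each $C_j$ has normal bundle $\mathcal{O}_{\P^1}(-1)$, so the relative Hilbert scheme of $\mathcal{X}/T$ is étale over $T$ near $[C_j]$, producing unique deformations $C_{j,t}$ that remain disjoint $(-1)$-curves near the central fiber, whose simultaneous contraction yields the family $g_t$ of morphisms onto $\P^2$ while preserving the dominance of the induced map on configurations.
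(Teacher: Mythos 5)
Your argument is correct and follows essentially the same two\hyphenchar\font=-1{}-case structure as the paper's proof: reduce the first assertion to Proposition~\ref{prop:dF} via disjointness of the contracted $(-1)$-curves, transport multiplicities and genus through intersection numbers with the exceptional curves, and verify Definition~\ref{defi:very-gen-pos} by extending the reduction $g$ over the deforming family coming from the very general position of $\S$, with dominance of $t\mapsto\G_t$ obtained from the inverse correspondence $f\o g^{-1}$. The one genuine difference is the tool used to propagate $g$ over the family: the paper invokes Wi\'sniewski's theorem on constancy of nef values to deform all $(-1)$-curves (and hence the reduction) over the whole base $T$, whereas you use the local rigidity of $(-1)$-curves (normal bundle $\O_{\P^1}(-1)$, \'etale relative Hilbert scheme), which is more elementary and self-contained but only produces the family near the central fiber. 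That restriction is harmless --- a holomorphic map on a connected disk with Zariski-dense image still has Zariski-dense image on any nonempty open subset --- but you should say a word about why the unique flat deformations $C_{j,t}$ remain \emph{integral smooth rational} curves (you cannot appeal to Proposition~\ref{prop:dF} on $X_t$, since the nearby configurations $\S_t$ need not themselves be very general); the standard fix is properness plus openness of smoothness of the fibers of the universal curve, together with $\chi(\O_{C_{j,t}})=1$, which forces each deformation to be a smooth connected rational curve and hence a $(-1)$-curve.
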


\begin{proof}
By Proposition~\ref{prop:dF},
every rational curve with negative self-intersection on $X$ is a $(-1)$-curve.
This implies that every other reduction to $\P^2$ is the contraction of $r$
disjoint $(-1)$-curves. In particular,
$\G = \{y_1,\dots,y_r\}$ is a subset of $\P^2$ of $r$ distinct points.

Let $B$ be an arbitrary integral curve on $\P^2$.
We need to check that $B$ satisfies the condition given in the definition of very general
position for $\G$. Let $C \subset X$ be the proper transform of $B$ via $g$.

If $C$ is $f$-exceptional, then it is a $(-1)$-curve.
In this case, let $U \subset (\P^2)^r$ be a contractible analytic open
neighborhood of $(y_1,\dots,y_r)$. We can assume that every
$u = (y_{1,u},\dots,y_{r,u}) \in U$ is an $r$-ple of distinct points.
Let $g_u \colon X_u \to \P^2$
be the blow-up at $\{y_{1,u},\dots,y_{r,u}\}$. We obtain a flat
family over $U$ with fibers $X_u$.
By Wi\'sniewski's theorem on the constancy of nef-values \cite{Wis},
every $(-1)$-curve of $X$ deforms in the family,
and in particular, so does $C$. Let $C_u \subset X_u$ denote the deformation of $C$,
and let $B_u := g_u(C_u)$. Note that $p_g(C_u) = 0$ for every $u$.
Moreover, if $F_{i,u} := g_u^{-1}(y_{i,u})$,
then $C_u\.F_{i,u}$ is independent of $u$. This means that
$\mult_{y_{i,u}}(B_u)$ is independent of $u$, and we also conclude that
$p_g(B_u)$ is independent of $u$.
By Remark~\ref{rmk:very-gen-pos:analytic}, this implies
that $B$ satisfies the condition in the definition of very general position.

Suppose now that $C$ is not $f$-exceptional, and let $D := f(C)$.
Since $\S$ is in very general position, $(D,(x_1,\dots,x_r))$
moves in a family $\{(D_t,(x_{1,t},\dots,x_{r,t})) \mid t \in T\}$
satisfying the conditions of Definition~\ref{defi:very-gen-pos}.
Using again Remark~\ref{rmk:very-gen-pos:analytic}, we assume that
$T$ is a contractible space.

We can assume that $\{x_{1,t},\dots,x_{r,t}\}$ is a set of distinct points
for every $t \in T$. Let $f_t \colon X_t \to \P^2$ be the corresponding blow-up.
We obtain an algebraic family $F \colon \X \to \P^2 \times T$ over $T$.
By the aforementioned result of Wi\'sniewski, every $(-1)$-curve of $X$
deforms in the family, and since the intersection product is constant in the
family, if two $(-1)$-curves are disjoint on some $X_t$,
then they remain disjoint throughout the deformation over $T$.

Since every reduction to $\P^2$ corresponds exactly to a selection
of $r$ disjoint $(-1)$-curves, the reduction $g \colon X \to \P^2$
also deforms in the family,
thus defining a morphism $G \colon \X \to \P^2 \times T$ over $T$,
inducing, for every $t \in T$, a reduction $g_t \colon X_t \to \P^2$.
Let $\{y_{1,t},\dots,y_{r,t}\}$ be the center of the blow-up of $g_t$.

Fix a general $t \in T$. An arbitrary small deformation
of the points $(y_{1,t},\dots,y_{r,t})$ induces,
via $f\o g^{-1}$, a small deformation of $(x_{1,t},\dots,x_{r,t})$,
which we can assume to
belong to the family parametrized by $T$. This implies that the map
$T \to (\P^2)^{(r)}$ sending $t$ to $\{y_{1,t},\dots,y_{r,t}\}$
has dense image.

For every $t$, denote by $C_t$ the proper transform of $D_t$ by $f$,
and let $B_t := g_t(C_t)$.
Since the family $\{(D_t,(x_{1,t},\dots,x_{r,t}))\mid t \in T\}$
satisfies the conditions of the definition of very general position,
$p_g(C_t)$ is independent of $t$. Note also that
if $F_{i,t}= g_t^{-1}(y_{i,t})$, then $C_t\.F_{i,t}$ is independent of $t$.
We conclude that $p_g(B_t)$ and $\mult_{y_{i,t}}(B_t)$ are independent of $t$.
\end{proof}

\begin{cor}\label{cor:all-reductions}
Let $X$ be the blow-up of $\P^2$ at a set of $r \ge 3$ points
in very general positions.
Let $E$ be a $(-1)$-curve on $X$, and let $X \to Y$ be the contraction of $E$.
Then $Y$ is the blow-up of $\P^2$ at a set of $r - 1$ points
in very general positions.
\end{cor}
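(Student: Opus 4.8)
The plan is to show that the contracted curve $E$ can be realized as one of the exceptional curves of \emph{some} reduction $X \to \P^2$; the corollary then follows at once by factoring that reduction through $Y$ and invoking Proposition~\ref{prop:very-gen-position}. As a preliminary I would record the converse of the observation used in the proof of Proposition~\ref{prop:very-gen-position}: any collection of $r$ pairwise disjoint $(-1)$-curves on $X$ can be simultaneously contracted, and the resulting surface is $\P^2$. Indeed, contracting disjoint $(-1)$-curves keeps the surface smooth and lowers the Picard number by one at each step, so the image is a smooth rational surface of Picard number one; having no $(-1)$-curve it is minimal, hence equal to $\P^2$. Thus the reductions $X \to \P^2$ are exactly the choices of $r$ disjoint $(-1)$-curves on $X$.

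Next I would fix the tautological reduction $f \colon X \to \P^2$, with exceptional curves $E_1,\dots,E_r$ over the very general points $x_1,\dots,x_r$, and write $[E] = d L - \sum_i m_i E_i$ in the associated geometric basis, where $L = f^*(\text{line})$. If $E = E_i$ for some $i$ there is nothing to prove; otherwise $d = E\.L \ge 1$ and $m_i = E\.E_i \ge 0$ for all $i$, since $E$ is an irreducible curve distinct from each $E_i$ and $L$ is nef. I would then run the classical Cremona reduction of the exceptional class $[E]$: after reordering so that $m_1 \ge m_2 \ge \cdots \ge m_r$, the elementary inequality $m_1 + m_2 + m_3 > d$, valid for every $(-1)$-class with $d \ge 1$, shows that the standard quadratic transformation based at $x_1,x_2,x_3$ strictly lowers the degree, to $d' = 2d - m_1 - m_2 - m_3 < d$. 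The key point is that this transformation is a \emph{bona fide} reduction of $X$: its exceptional curves are the three proper transforms $L - E_i - E_j$ of the lines through pairs of $x_1,x_2,x_3$ together with $E_4,\dots,E_r$, and these are $r$ pairwise disjoint irreducible $(-1)$-curves precisely because the points are in very general position (no line through two of them carries a further $x_l$). By the preliminary step they contract to $\P^2$, and by Proposition~\ref{prop:very-gen-position} the new $r$ points are again distinct and in very general position, so the descent can be repeated. Since the degree of $E$ strictly decreases while staying nonnegative (it equals $E\.L'$ for the nef pullback $L'$ of a line in the new model), after finitely many steps one reaches a reduction $h_0 \colon X \to \P^2$ in which $E$ is one of the exceptional $(-1)$-curves.

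Finally, as $E$ is disjoint from the remaining $r-1$ exceptional curves of $h_0$, the morphism $h_0$ factors as $X \to Y \to \P^2$, the first map being the contraction of $E$ and the second contracting the images of the other $r-1$ exceptional curves; hence $Y$ is the blow-up of $\P^2$ at $r-1$ points, which by Proposition~\ref{prop:very-gen-position} (and the fact that subsets of very general sets are very general) lie in very general position. I expect the main obstacle to lie not in the combinatorics but in guaranteeing that \emph{each} quadratic transformation in the descent is a genuine reduction rather than a purely numerical manipulation of classes; this is exactly what very general position, through repeated use of Proposition~\ref{prop:very-gen-position}, supplies, while the classical inequality $m_1+m_2+m_3>d$ merely ensures that the procedure terminates.
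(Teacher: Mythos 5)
Your argument is correct, but it reaches the key point --- the existence of a reduction to $\P^2$ compatible with the contraction of $E$ --- by a genuinely different route from the paper. The paper contracts $E$ first and argues abstractly on $Y$: by Proposition~\ref{prop:dF} the surface $X$, and hence $Y$, carries no integral rational curve of self-intersection less than $-1$, so the minimal models (in the classical sense) of the rational surface $Y$ can only be $\P^2$ or $\P^1\times\P^1$, and the hypothesis $\rho(Y)=r\ge 3$ is invoked to guarantee that $\P^2$ is indeed among them; one then applies Proposition~\ref{prop:very-gen-position} to the composite reduction $X\to Y\to\P^2$. You instead work entirely on $X$ and explicitly produce, via Noether's Cremona-reduction of the class $[E]$, a reduction $h_0\colon X\to\P^2$ having $E$ among its exceptional curves, and then factor $h_0$ through $Y$. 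Your route is longer but constructive: it in effect reproves, in this very general setting, the transitivity of the Weyl group action on $(-1)$-classes, and it makes explicit where $r\ge 3$ enters (three points are needed for each quadratic transformation) and where very general position is used --- through repeated appeals to Proposition~\ref{prop:very-gen-position} --- to upgrade each numerical quadratic transformation to an actual morphism with irreducible, pairwise disjoint exceptional $(-1)$-curves. The paper's route is shorter but leans on the classification of minimal rational surfaces and on the rather terse remark that $\rho(Y)\ge 3$ prevents getting stranded at $\P^1\times\P^1$. The one ingredient you quote without proof, the inequality $m_1+m_2+m_3>d$ for a $(-1)$-class of positive degree with nonnegative multiplicities, is classical and elementary (it follows from $\sum m_i=3d-1$ and $\sum m_i^2=d^2+1$), so I do not regard its omission as a gap; your termination argument via the nef class $L'$ is also sound.
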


\begin{proof}
By Proposition~\ref{prop:very-gen-position},
we only need to check that there is a reduction from $Y$ to $\P^2$.
we know by Proposition~\ref{prop:dF} that $X$ does not contain any
integral rational curve with self-intersection less than $-1$.
It follows that $Y$ does not contain any such curve either.
Therefore, since $Y$ is rational, its minimal models (in the classical sense)
can only be $\P^2$ and $\P^1 \times \P^1$.
The condition that $\rho(Y) = r \ge 3$ is what we need to ensure
that $\P^2$ is indeed a minimal model of $Y$.
\end{proof}

\section{Geography of extremal rays}\label{sect:main}

Let $X$ be the blow-up of $\P^2$
at a set $\S$ of $r$ points in very general position.
We identify the space of numerical classes of 1-cycles
$\big(Z_1(X)/\equiv\big) \otimes \R$
with the N\'eron--Severi space $\big(\Pic(X)/\equiv\big)\otimes \R$,
and denote it by $N(X)$. Let $\NE(X) \subset N(X)$ be the cone spanned by effective
curves, and let $\CNE(X)$, the Mori cone, be its closure.
Let $L$ be the pull-back of a line from $\P^2$.
For any class $\a \in N(X)$, we denote $R(\a) := \R_{\ge 0}(\a)$,
and for any divisor $D$ on $X$, we denote $R(D) := R([D])$.
If $V \subset N(X)$ is a closed cone, then we denote by $V^\o$
the relative interior of $V$,
and set $\de V := V \smallsetminus V^\o$.
Let
$$
Q = Q(X) = \{ \a \in N(X) \mid \a^2 \ge 0, \a\.h > 0\}.
$$
where $h$ is any ample class on $X$.
The dual of $\CNE(X)$ is the nef cone $\Nef(X)$.
There are inclusions $\Nef(X) \subseteq Q \subseteq \CNE(X)$.

\begin{rmk}\label{rmk:extremal-NE-Nef}
By Hodge Index Theorem, $Q$ is defined in suitable coordinated by
the conditions $u_0^2 \ge u_1^2 + \dots + u_r^2$ and $u_0 \ge 0$.
Moreover, if $R$ is a ray supported on the boundary of $Q$,
then $R^\perp \cap Q = R$. This implies that if $R$ is a ray supported on the
boundary of $Q$, then $R\subset\de\CNE(X)$ if and only $R \subset\Nef(X)$,
and of course in this case $R$ is in the boundary of $\Nef(X)$.
Note that such a ray is always an extremal ray of $\Nef(X)$, but not
necessarily of $\CNE(X)$.
\end{rmk}

Within the extremal rays of $\CNE(X)$, we focus on the isolated ones.
We denote the set of isolated extremal rays of $\CNE(X)$ by $\E$.
We also consider the subsets $\E^-,\E^+\subseteq \E$
or rays that are spanned by an integral curve $C$ with
$K\.C < 0$ or $K\.C > 0$, respectively.
By the Cone Theorem, $\E^-$ is precisely the set of rays
spanned by $(-1)$-curves. We call them {\it $(-1)$-rays}.
In the following we will also denote the set of $(-1)$-rays by $\{R_i\}$,
so that we can use the shorted notation $\sum R_i$ to denote
$\sum_{R \in \E^-}R$.

After fixing an Euclidean metric on $N(X)$, we define the {\it angular distance}
$d(V,V')$ between two cones $V,V'\subseteq N(X)$ as in \cite[Section~1]{dF}.
The next property holds in general, on any smooth projective surface.

\begin{lem}[\protect{\cite[Lemma~1.2]{dF}}]\label{lem:dF}
For every extremal ray $R_i$ of $\CNE(X)$,
$$
d(R,Q) \le \inf\{d(R,R') \mid
\text{$R'$ is an extremal ray of $\CNE(X)$ different from $R$}\}.
$$
In particular, any Cauchy sequence of extremal rays of $\CNE(X)$ converges to a ray on
the boundary of $Q$.
\end{lem}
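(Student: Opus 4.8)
The plan is to work in the coordinates supplied by Remark~\ref{rmk:extremal-NE-Nef}, so that the intersection form reads $u_0^2-u_1^2-\dots-u_r^2$ and the fixed Euclidean metric is the standard one; then $Q$ is the circular cone of half-angle $\p/4$ about the axis $e_0=(1,0,\dots,0)$. Let $v$ span the extremal ray $R$. If $v^2\ge 0$ then $R\subseteq Q$, so $d(R,Q)=0$ and the inequality is trivial; hence I would assume $v^2<0$. Since $v\in\CNE(X)$ pairs strictly positively with every interior class of $Q$, its $e_0$-component satisfies $v_0>0$, so $v$ makes a Euclidean angle $\f(v)\in(\p/4,\p/2)$ with the axis, and a direct projection onto the plane $\langle e_0,v\rangle$ gives $d(R,Q)=\f(v)-\p/4$.

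Next I would fix another extremal ray $R'=R(w)$ with $R'\ne R$ and split into two cases. If $w^2\ge 0$ then $w\in Q$, whence $d(R,R')\ge d(R,Q)$ for free. The substantive case is $w^2<0$. Here the intended tool is the intersection estimate $v\.w\ge 0$: writing $v=(v_0,\vec v)$ and $w=(w_0,\vec w)$, this says $\vec v\.\vec w\le v_0w_0$. Bounding
$\cos(\mathrm{angle}(v,w))=\frac{v_0w_0+\vec v\.\vec w}{|v|\,|w|}$
from above via $\vec v\.\vec w\le v_0w_0$, together with $|w|>\sqrt2\,w_0$ (because $w$ is spacelike) and $|\vec v|\ge v_0$, reduces the desired inequality $\mathrm{angle}(v,w)\ge\f(v)-\p/4$ to the single elementary fact $v_0\le|\vec v|$, which holds precisely because $v^2<0$. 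I expect this to be a short computation once $v\.w\ge 0$ is in hand, so I would not grind through it in detail.

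The main obstacle is therefore establishing $v\.w\ge 0$ for two distinct spacelike extremal rays. My plan is to invoke the structural fact that every extremal ray of $\CNE(X)$ lying outside $Q$ is spanned by the class of an irreducible curve of negative self-intersection (on a surface these are exactly the negative curves; in the present setting $\E^-$ consists of $(-1)$-curves). Granting this, $v$ and $w$ are classes of distinct irreducible curves, which meet non-negatively, yielding $v\.w\ge 0$; this structural input is the only non-elementary ingredient, and once it is secured the rest is the trigonometric estimate above. Finally, for the ``in particular'' assertion I would argue that for a Cauchy sequence of distinct extremal rays the consecutive distances tend to $0$, so by the first part $d(R^{(k)},Q)\to 0$; the limiting ray thus lies in $Q$, and being a limit of rays not meeting $Q^\o$ it must lie on $\de Q$.
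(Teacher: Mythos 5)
This lemma is quoted from \cite[Lemma~1.2]{dF} and the paper gives no proof of it, so there is nothing internal to compare against; judged on its own, your argument is correct and is essentially the computation one would extract from the source. Two points are worth making explicit. First, your choice of metric is not merely a convenience: the inequality is false for a completely arbitrary Euclidean metric on $N(X)$ (e.g.\ on the blow-up at two points, a metric nearly collapsing the direction $[E_1]-[E_2]$ makes $d(R(E_1),R(E_2))$ arbitrarily small while $d(R(E_1),Q)$ stays bounded below), so one must say that the fixed metric is the standard one in coordinates diagonalizing the intersection form as $u_0^2-u_1^2-\dots-u_r^2$ --- which is exactly the normalization of \cite[Section~1]{dF}, and in which $Q$ is the round cone of half-angle $\pi/4$ about $e_0$. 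Granting that, your reduction is sound: the only non-elementary input is that an extremal ray not contained in $Q$ is spanned by an irreducible curve of negative self-intersection (Kleiman gives $v\.h>0$, hence $v^2<0$, and then \cite[Remark~1.4.33]{Laz} applies), distinct irreducible curves meet non-negatively, and the trigonometric estimate from $\vec v\.\vec w\le v_0w_0$, $|w|_E>\sqrt2\,w_0$, $|\vec v|\ge v_0$ does close as you claim. For the ``in particular'' part, you should also record that no extremal ray of $\CNE(X)$ can lie in $Q^\o$ (a ray in $Q^\o$ is a positive combination of other rays of $Q\subseteq\CNE(X)$), so that the limit of a non-eventually-constant Cauchy sequence, which lies in $Q$ by the first part, indeed lies on $\de Q$; as literally stated the conclusion must of course be read for sequences of pairwise distinct rays.
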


We obtain the following fact.

\begin{prop}\label{prop:E=E+uE-}
With the above notation, $\E = \E^- \cup \E^+$.
\end{prop}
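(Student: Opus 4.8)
The plan is to prove the nontrivial inclusion $\E \subseteq \E^- \cup \E^+$, the reverse inclusion $\E^- \cup \E^+ \subseteq \E$ being immediate from the definitions. Concretely, I must show that every isolated extremal ray $R$ of $\CNE(X)$ is spanned by an integral curve $C$ with $K\cdot C \neq 0$. Writing $\a$ for a generator of $R$, since $Q^{\circ} \subseteq \CNE(X)^{\circ}$ no extremal ray lies in $Q^{\circ}$, so $\a^2 \le 0$; the heart of the argument is to upgrade this to $\a^2 < 0$, that is, to show $R$ does not lie on $\de Q$.

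Granting $\a^2 < 0$, I would first extract the curve. Fixing an ample $h$ and slicing by $\{h = 1\}$ produces a compact convex body (compact by Kleiman's criterion) whose extreme points correspond to the extremal rays of $\CNE(X)$, and the point cut out by $R$ is an isolated extreme point. As $\CNE(X)$ is the closed convex hull of the integral curve classes, Milman's theorem places this point in the closure of the set of normalized integral curve classes. Were it not itself such a class, there would be distinct integral curves $C_n$ with $R([C_n]) \to R$; since $\a^2 < 0$, for $n \gg 0$ one has $C_n^2 < 0$, so each $C_n$ spans an extremal ray (an irreducible curve of negative self-intersection cannot be split in $\CNE(X)$), and these distinct rays converge to $R$, contradicting isolation. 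Hence $R = R(C)$ for an integral curve $C$ with $C^2 < 0$. Adjunction gives $2p_a(C) - 2 = C^2 + K\cdot C$; if $K\cdot C = 0$ then $C^2 = -2$ and $C$ is a rational curve of negative self-intersection, contradicting Proposition~\ref{prop:dF}. Thus $K\cdot C \neq 0$ and $R \in \E^- \cup \E^+$.

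The hard part is excluding the case $R \subset \de Q$. Suppose it occurs, so $\a^2 = 0$ and, $R$ being extremal, $R \subset \de\CNE(X)$, whence $\a \in \Nef(X)$ by Remark~\ref{rmk:extremal-NE-Nef}. Because $R$ is an isolated extreme point it is exposed (by Straszewicz's theorem the exposed points are dense among the extreme points, so an isolated extreme point must itself be exposed), yielding a nef class $m$ with $m^\perp \cap \CNE(X) = R$. Since $m \in \Nef(X) \subseteq Q$ we have $m^2 \ge 0$, and the Hodge Index Theorem forbids $m^2 > 0$: a class of positive square has negative definite orthogonal complement, which cannot contain the nonzero null class $\a \in R \subset m^\perp$. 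Hence $m^2 = 0$; two orthogonal null classes are proportional, so $m$ is a multiple of $\a$ and $\a^\perp \cap \CNE(X) = R$. In particular $\a^\perp$ is the unique supporting hyperplane of $\CNE(X)$ along $R$, which rules out the dangerous configuration of a flat boundary facet tangent to $Q$ along $R$.

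To conclude I would contradict isolation. Any null ray $R' \subset \de Q$ near $R$ lies in $\CNE(X)$; if $R'$ were nef it would, by Remark~\ref{rmk:extremal-NE-Nef}, lie on $\de\CNE(X)$ and hence be an extremal ray distinct from and arbitrarily close to $R$, contradicting isolation. So all nearby null rays lie in $\CNE(X)^{\circ}$, forcing $\CNE(X)$ to protrude strictly beyond $Q$ on both sides of $R$ while meeting $\de Q$ only at $R$. Via the Cone Theorem decomposition $\CNE(X) = \CNE(X)|_{K^{\ge 0}} + \sum R_i$, this protrusion must be supported by $(-1)$-rays, which accumulate at $R$ by Lemma~\ref{lem:dF}; this again contradicts isolation. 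I expect this final step—showing that the accumulating $(-1)$-rays genuinely converge to $R$ itself rather than to some other point of $\de Q$—to be the main technical obstacle, with the exposedness argument of the previous paragraph serving as the key simplification.
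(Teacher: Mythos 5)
Your skeleton coincides with the paper's: show that an isolated extremal ray cannot lie on $Q$, deduce that it is spanned by an integral curve of negative self\-/intersection, and then use adjunction plus Proposition~\ref{prop:dF} to rule out $K\cdot C=0$ (which would force a $(-2)$-curve). Your second and third steps are sound: the Milman/accumulation argument correctly reproves what the paper simply cites from \cite[Remark~1.4.33]{Laz}, and the adjunction endgame is exactly the paper's. The gap is entirely in the first step, which the paper disposes of in one line by appealing to Lemma~\ref{lem:dF}, and which you attempt to establish from scratch without closing.

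Two concrete problems there. First, in your dichotomy the branch ``$R'$ nef, hence on $\de\CNE(X)$, hence an extremal ray close to $R$'' is not valid: Remark~\ref{rmk:extremal-NE-Nef} says explicitly that a nef ray on $\de Q$ is extremal in $\Nef(X)$ but \emph{not necessarily} in $\CNE(X)$ (the class of a conic-bundle fiber, a sum of two $(-1)$-classes, is the standard example), so this branch produces no contradiction with the isolation of $R$ in $\E$. Second, the remaining branch --- that the protrusion of $\CNE(X)$ beyond $Q$ near $R$ is supported by $(-1)$-rays accumulating \emph{at $R$ itself} --- is precisely what you need and is left unproved; Lemma~\ref{lem:dF} as stated only locates the limits of Cauchy sequences of extremal rays on $\de Q$, not at $R$. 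Your exposedness/Hodge-index reduction is correct and genuinely useful: it gives $\a$ nef, $\a^2=0$ and $\a^\perp\cap\CNE(X)=R$. From there the argument can in fact be closed without the dichotomy, by a curvature comparison: since $R$ is an isolated extremal ray, the slice $\CNE(X)\cap\{h=1\}$ is the convex hull of the point $p$ of $R$ and a compact convex set $B'$ on which $\a\cdot(-)\ge\d>0$, so the cap $\{\,x : \a\cdot x\le\ep\,\}$ of the slice has diameter $O(\ep)$; but it contains the corresponding cap of the ellipsoid $Q\cap\{h=1\}$, whose diameter is of order $\sqrt{\ep}$ because $\a^\perp$ is the tangent hyperplane at $p$ (this uses $\rho(X)\ge 3$; for $\F_1$ the fiber class really is an isolated extremal ray on $\de Q$). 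This quantitative step --- or simply citing the full strength of \cite[Section~1]{dF} behind Lemma~\ref{lem:dF}, as the paper does --- is what your proposal is missing.
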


\begin{proof}
Lemma~\ref{lem:dF} implies that an
isolated extremal ray of $\CNE(X)$ cannot lie on $Q$,
and thus is spanned by a class with negative self-intersection.
As explained in \cite[Remark~1.4.33]{Laz}, any extremal ray
that is spanned by a class with negative self-intersection
is in fact spanned by the class of an integral curve, and is isolated.
Proposition~\ref{prop:dF} implies that there are no $(-2)$-curves on $X$.
This excludes the only possible irreducible and reduced curves with
negative self-intersections
and trivial canonical degree, and thus completes the proof.
\end{proof}

To analyze further the geometry of $\CNE(X)$,
the idea is to look at the positions of the $(-1)$-rays
with respect to the cone $Q + R(K)$.
One can visualize the area that is contained in this cone
as the {\it shade} of $Q$ from $-K$, imagining $-K$ as the {\it source of light}.

If $r < 9$, then $X$ is a Del Pezzo surface,
and there are finitely many $(-1)$-rays (marked as black dots in the figure).
These are the only extremal rays of $\CNE(X)$.
Since $-K \in Q^\o$, we have $Q + R(K) = N(X)$, and hence, trivially,
every $(-1)$-ray is in the interior of this region.

\begin{figure}[h!]
\centering
\input r-up-to-9.pic
\end{figure}

If $r = 9$, then $-K$ lies on the boundary of $Q$, and thus
$Q + R(K) = K^{\le 0}$. All the $(-1)$-rays are contained
in the interior of this half-space.
There are infinitely many $(-1)$-rays,
and they cluster to $R(-K)$ (the latter is always an effective ray,
generated by the proper transform of the unique cubic passing through the nine points).
The cone $\CNE(X)$ is generated
by the $(-1)$-rays and $R(-K)$, and $R(-K)$
is the only non-isolated extremal ray of the cone
(cf. \cite[Proposition~12]{Nag}).

As soon as we blow up ten or more points,
the configuration of the extremal rays begins to show a different behavior.

\begin{prop}\label{thm:(-1)-rays:r>9}
Let $f\colon X \to \P^2$ be the blow-up at a set of
$r \ge 10$ points in very general position, and let $s := \sqrt{r-1}-3$.
Then every $(-1)$-ray of $X$ is contained in the cone $Q + R(K-sH)$, and
if the $(-1)$-Rays Conjecture is true, then for every $r \ge 10$ we have
$$
\CNE(X) |_{(K-sL)^{\ge 0}} = Q |_{(K-sL)^{\ge 0}},
$$
where $L$ is the pull-back of a line in $\P^2$
(under any reduction to $\P^2$).
Moreover:
\begin{enumerate}
\item
If $r = 10$, then every $(-1)$-ray lies on the boundary of $Q + R(K)$ and,
assuming that the $(-1)$-Rays Conjecture is true, we have
$$
\CNE(X) |_{K^{\ge 0}} = Q |_{K^{\ge 0}}.
$$
\item
If $r > 10$, then every $(-1)$-ray lies on the complement of $Q + R(K)$ and
$$
\CNE(X) |_{K^{\ge 0}} \supsetneq Q|_{K^{\ge 0}}.
$$
\end{enumerate}
\end{prop}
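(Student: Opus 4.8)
The plan is to reduce everything to a single one--variable computation. Fix the standard basis $L,E_1,\dots,E_r$ of $N(X)$, with $L^2=1$, $E_i\cdot E_j=-\delta_{ij}$ and $L\cdot E_i=0$, so that $Q=\{u_0^2\ge u_1^2+\cdots+u_r^2,\ u_0\ge0\}$ and $K=-3L+\sum E_i$. For a $(-1)$-curve $C$ write $d=L\cdot C\ge0$; adjunction gives $C^2=-1$ and $K\cdot C=-1$. The numeric input is that $s=\sqrt{r-1}-3$ is exactly the value making $(K-sL)^2=-1$: indeed $(K-sL)^2=(9-r)+6s+s^2=(9-r)+\big((s+3)^2-9\big)=(9-r)+(r-1-9)=-1$. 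I would first record, for $v:=K-sL$ (so $v^2=-1$, $v\cdot C=-1-sd$), the quadratic
\begin{equation*}
(C-t v)^2=-1+2t(1+sd)-t^2,
\end{equation*}
whose maximum over $t$ equals $(1+sd)^2-1=s^2d^2+2sd\ge0$, attained at $t^\ast=1+sd\ge1$. Since this maximum is nonnegative and is attained at $t^\ast\ge0$, the class $C-t^\ast v$ lies in $\bar Q$ (the positivity $(C-t^\ast v)\cdot L>0$ is automatic from $3+s>0$), and this is precisely the statement $[C]\in Q+R(K-sL)$, proving the first assertion. The same computation yields part (1): for $r=10$ we have $s=0$, $v=K$, the maximum is $0$, and it is attained only at $t=1$, so $[C]=q+K$ with $q\in\partial Q$ is forced, placing every $(-1)$-ray on $\partial(Q+R(K))$.

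For the shade from $K$ itself I would use the companion quadratic
\begin{equation*}
(C-tK)^2=-1+2t+(9-r)t^2,
\end{equation*}
whose maximum over $t$ is $\tfrac{10-r}{r-9}$, attained at $t^\ast=\tfrac1{r-9}$, where moreover $K\cdot(C-t^\ast K)=0$. For $r>10$ this maximum is negative, so $C-tK\notin\bar Q$ for every $t$, i.e. $[C]$ lies in the complement of $Q+R(K)$; this is the first half of part (2). The conjectural equalities then follow formally: assuming $\CNE(X)=Q+\sum R_i$, the first assertion gives $\CNE(X)\subseteq Q+R(v)$, and for any $\alpha\in\CNE(X)$ with $v\cdot\alpha\ge0$, writing $\alpha=q+tv$ with $q\in\bar Q$ and $t\ge0$, the inequality $v\cdot\alpha=v\cdot q-t\ge0$ gives $v\cdot q\ge t$, whence $\alpha^2=q^2+2t(v\cdot q)-t^2\ge q^2+t^2\ge0$; since $L$ is nef and $\alpha\ne0$ this forces $\alpha\cdot L>0$, so $\alpha\in Q$. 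Thus $\CNE(X)|_{(K-sL)^{\ge0}}=Q|_{(K-sL)^{\ge0}}$, which for $r=10$ (where $v=K$) is the equality in part (1).

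The genuinely delicate point is the strict inclusion in part (2), and this is where I expect the main obstacle. The negativity of the maximum $\tfrac{10-r}{r-9}$ says exactly that $[C]\notin\bar Q+R(K)$ for every $(-1)$-curve. Because $\bar Q$ is self--dual for the intersection form, the dual of the closed convex cone $\bar Q+R(K)$ is $\bar Q\cap\{p\cdot K\ge0\}$, so separation produces a class $p\in\bar Q$ with $p\cdot K\ge0$ and $p\cdot C<0$; using a fixed $q_0\in\bar Q$ with $q_0\cdot K>0$ (which exists once $r>9$, e.g. the balanced null class $\sqrt r\,L-\sum E_i$, for which $K\cdot q_0=\sqrt r(\sqrt r-3)>0$) and replacing $p$ by $p+\epsilon q_0$, I may assume $p\cdot K>0$. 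I would then form the witness $\alpha=p+t[C]$: it lies in $\CNE(X)$, it satisfies $K\cdot\alpha=p\cdot K-t\ge0$ for $0<t\le p\cdot K$, and $\alpha^2=p^2+2t(p\cdot C)-t^2$, which is negative for a suitable such $t$ provided $p^2$ is small relative to $p\cdot K$.

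The real work is to arrange this last inequality, i.e. to choose the separating class $p$ close to the boundary $\partial Q$ (so that $p^2$ is negligible next to $p\cdot K$ and $p\cdot C$) while keeping $p\cdot K>0$ and $p\cdot C<0$. Since these are open, homogeneous conditions cutting out a nonempty subcone of $\bar Q$, such $p$ arbitrarily near $\partial Q$ do exist, and any one of them produces $\alpha\in\CNE(X)|_{K^{\ge0}}\smallsetminus Q$. Carrying out this boundary approximation carefully is the crux of the argument; once it is in place, the strict inclusion $\CNE(X)|_{K^{\ge0}}\supsetneq Q|_{K^{\ge0}}$ for $r>10$ follows, and the remaining assertions are the bookkeeping above.
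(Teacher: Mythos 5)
Your computational core coincides with the paper's: the paper also locates the $(-1)$-rays by examining the discriminant of $(tC-(K-sL))^2=0$, which is exactly your observation that the maximum of $(C-tv)^2$ over $t$ equals $s^2d^2+2sd\ge 0$ (and equals $0$ precisely when $s=0$, i.e.\ $r=10$, while the analogous maximum for $v=K$ is $\tfrac{10-r}{r-9}<0$ for $r>10$). Where you genuinely diverge is in how the cone-theoretic conclusions are extracted. The paper works dually: assuming the conjecture, every facet $G$ of $\Nef(X)$ is dual to a $(-1)$-ray, hence $G\subset (K-sL)^{\le 0}$ by the equivalence (i)$\Leftrightarrow$(iii) of Lemma~\ref{lem:shade:TFAE}, so $\Nef(X)|_{(K-sL)^{>0}}=Q|_{(K-sL)^{>0}}$ and one dualizes back; similarly part~(b) is obtained by exhibiting a nef-cone defect $\Nef(X)|_{K^{\ge0}}\subsetneq Q|_{K^{\ge0}}$. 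You instead argue primally, decomposing $\a=q+tv$ and using $v^2=-1$ to get $\a^2\ge q^2+t^2$; this is arguably cleaner for the conditional equality, since it avoids the facet/countable-hyperplane argument entirely (just make sure you note that $\bar Q+R(v)$ is closed because $-v\notin\bar Q$, so that the conjectural generation of $\CNE(X)$ really does give $\CNE(X)\subseteq \bar Q+R(v)$).

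The one place where your argument is not finished is the strict inclusion in part~(b): you correctly reduce it to producing $p\in\bar Q$ with $p\cdot K>0$, $p\cdot C<0$ and $p^2$ small relative to these, but you leave the ``boundary approximation'' as an acknowledged gap, and the one-line justification (``open homogeneous conditions on a nonempty subcone'') is not by itself enough — a nonempty open subcone of $\bar Q$ need not approach $\de Q$ in general. The step is nonetheless easily closed: first perturb the separating class into $Q^\o\cap K^{>0}\cap C^{<0}$ (add $\ep q_0+\ep^2 h$ with $q_0=\sqrt r\,L-\sum E_i$ and $h$ ample), then slide along a direction $w\in K^\perp\cap C^\perp$ with $w^2<0$ (such $w$ exists since this subspace has dimension $\ge r-1$ and the form has signature $(1,r)$) until you hit $\de Q$; the resulting $p$ has $p^2=0$, $K\cdot p>0$, $C\cdot p<0$, and then $\a=p+t[C]$ with $0<t\le K\cdot p$ gives the witness. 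The paper sidesteps this issue by running the same separation on the nef side, where $\Nef(X)\subseteq C^{\ge0}\cap Q$ makes the defect visible without constructing an explicit effective class; your route, once the above is inserted, has the advantage of exhibiting a concrete class in $\CNE(X)|_{K^{\ge0}}\smallsetminus Q$.
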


\begin{figure}[h!]
\centering
\input r-at-least-10.pic
\label{fig:2}
\end{figure}

The proof of this proposition is based on elementary computations. 
For the convenience of the reader, we give all the details. 
We start with two lemmas.

\begin{lem}\label{lem:shade:TFAE}
Let $\a, \b \in N(X)$ be classes with $\a^2 < 0$, $\b^2 < 0$ and $\a\.\b < 0$,
and assume that there is a class $\g \in Q^\o$ such that $\a\.\g \le 0$
and $\b \. \g \ge 0$.
Then the following three conditions are equivalent:
\begin{enumerate}
\item[(i)]
$R(\b) \subset (Q + R(\a))$;
\item[(ii)]
$(t\b - \a)^2 = 0$ has solutions;
\item[(iii)]
$\b^\perp \cap Q \subset \a^{\le 0}$.
\end{enumerate}
Similarly, the following three conditions are equivalent:
\begin{enumerate}
\item[(j)]
$R(\b) \subset (Q + R(\a))^\o \cup \{0\}$;
\item[(jj)]
$(t\b - \a)^2 = 0$ has two distinct solutions;
\item[(jjj)]
$\b^\perp \cap Q \subset \a^{< 0} \cup \{0\}$.
\end{enumerate}
\end{lem}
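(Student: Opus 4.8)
The plan is to reduce all six conditions to linear algebra in the Lorentzian space $(N(X),\.)$, whose intersection form has signature $(1,\rho(X)-1)$ by the Hodge Index Theorem (cf. Remark~\ref{rmk:extremal-NE-Nef}). The single quantity that governs everything is the discriminant
$$
\Delta := (\a\.\b)^2 - \a^2\b^2 .
$$
Indeed, expanding $(t\b - \a)^2 = \b^2 t^2 - 2(\a\.\b)t + \a^2$ and recalling $\b^2 < 0$, condition (ii) holds iff this quadratic has real roots, i.e. iff $\Delta \ge 0$, and (jj) holds iff it has distinct roots, i.e. iff $\Delta > 0$. So the whole task reduces to proving that (i) and (iii) are each equivalent to $\Delta \ge 0$, and (j), (jjj) to $\Delta > 0$. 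Throughout I would take $Q$ to be the closed forward cone (including the origin), which does not affect any ray statement.

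For (i), since both $Q + R(\a)$ and $R(\b)$ are cones, $R(\b) \subset Q + R(\a)$ holds iff $\b - \m\a \in Q$ for some $\m \ge 0$. I would analyze the downward parabola $p(\m) := (\b - \m\a)^2 = \a^2\m^2 - 2(\a\.\b)\m + \b^2$: here $p(0) = \b^2 < 0$, $p'(0) = -2(\a\.\b) > 0$, and the leading coefficient $\a^2$ is negative, so the vertex sits at $\m^* = (\a\.\b)/\a^2 > 0$ and both roots (when real) are positive. Hence $p(\m) \ge 0$ for some $\m \ge 0$ iff $\Delta \ge 0$. The only remaining issue is orientation: a class $\b - \m\a$ with $(\b-\m\a)^2 \ge 0$ must be shown to lie in the \emph{forward} cone $Q$, and this is exactly where $\g$ enters. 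Since $\b\.\g \ge 0$ and $-\m(\a\.\g) \ge 0$ for $\m \ge 0$, one gets $(\b - \m\a)\.\g \ge 0$, and any nonzero class $v$ with $v^2 \ge 0$ and $v\.\g \ge 0$ is forward because $\g$ is timelike. This gives (i) $\iff \Delta \ge 0$; for (j) I would rerun the argument with the strict inequality $p(\m) > 0$, placing $\b - \m\a$ in $Q^\o$ and hence $\b$ in $(Q + R(\a))^\o$, which happens iff $\Delta > 0$.

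For (iii), I would project $\a$ orthogonally onto the Lorentzian hyperplane $W := \b^\perp$ (of signature $(1,\rho(X)-2)$, since $\b$ is spacelike), writing $\a = \tfrac{\a\.\b}{\b^2}\b + \a^\perp$ with $\a^\perp \in W$. For $x \in W$ one has $\a\.x = \a^\perp\.x$, so (iii) says $\a^\perp\.x \le 0$ for every $x$ in the forward light cone $W \cap Q$. By the self-duality of the light cone in the Lorentzian space $W$, this holds iff $-\a^\perp$ lies in the forward cone, i.e. iff $\a^\perp$ is causal and backward. Now $\b^2(\a^\perp)^2 = \a^2\b^2 - (\a\.\b)^2 = -\Delta$, and since $\b^2 < 0$ this yields $(\a^\perp)^2 \ge 0 \iff \Delta \ge 0$; the backward orientation is again supplied by $\g$, since from $\a\.\g \le 0$, $\a\.\b < 0$, $\b^2 < 0$, $\b\.\g \ge 0$ a short computation gives $\a^\perp\.\g \le 0$. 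Thus (iii) $\iff \Delta \ge 0$, and (jjj) follows identically, with $\a^\perp$ timelike backward iff $(\a^\perp)^2 > 0$ iff $\Delta > 0$.

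The computations are routine once this framework is in place; the genuine difficulty is the orientation bookkeeping that distinguishes the forward light cone from the backward one, and this is precisely what the hypotheses $\a\.\g \le 0$ and $\b\.\g \ge 0$ are for — without them the discriminant condition would only locate the relevant classes in the full two-sheeted causal cone. I expect the most delicate point to be the boundary case $\Delta = 0$ in the strict equivalences: there the plane $\langle\a,\b\rangle$ is tangent to the light cone, and one must verify that $R(\b)$ genuinely lands on $\de(Q + R(\a))$ rather than in its interior. I would settle this by observing that $\b - \m\a$ is then forced to be null for the unique admissible $\m$, so every representation of $\b$ coming from $Q + R(\a)$ must use a boundary class of $Q$.
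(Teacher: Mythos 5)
Your proof is correct, and its backbone --- reducing all six statements to the sign of the discriminant $\D=(\a\.\b)^2-\a^2\b^2$ --- is the same as the paper's. For (i)$\Leftrightarrow$(ii) you and the paper argue identically (the quadratic $(\b-\m\a)^2$ has only positive roots when it has real ones), except that you spell out the orientation check $(\b-\m\a)\.\g\ge 0$ placing the causal class in the forward cone $Q$ rather than $-Q$; the paper leaves this point implicit, so your version is actually more complete there. You genuinely diverge on (iii): the paper shows that $(t\b-\a)^2\ne 0$ for all $t$ is equivalent to the existence of a class of $Q^\o$ orthogonal to the plane $\langle\a,\b\rangle$, and then perturbs such a class to produce one violating (iii); you instead project $\a$ orthogonally into the Lorentzian hyperplane $\b^\perp$ and invoke self-duality of the forward cone, reading off causality from $\b^2(\a^\perp)^2=-\D$ and orientation from $\a^\perp\.\g\le 0$. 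Both are standard pieces of Lorentzian linear algebra; your dual picture has the advantage of making the role of $\g$ uniform (it always just selects the forward component of the causal cone). The one step to tighten is the boundary case $\D=0$ of (j): to see that $R(\b)\not\subset(Q+R(\a))^\o$, note that the null class $\nu=\b-\m^*\a$ satisfies $\nu\.\a=\nu\.\b=0$ and $\nu\.q\ge 0$ for all $q\in Q$, so $\nu\.(-)$ is a supporting functional for $Q+R(\a)$ vanishing on $\b$; this (or the identity $\mathrm{ri}(A+B)=\mathrm{ri}(A)+\mathrm{ri}(B)$ for convex cones) turns your remark that ``every representation of $\b$ must use a boundary class of $Q$'' into a proof.
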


\begin{proof}
We only discuss the first series of equivalences.
First note that for any two nonzero classes $\a,\b\in N(X)$,
we have $R(\b) \subset (Q + R(\a))$ if and only if
$t\b - \a \in Q$ for some $t > 0$. Similarly,
assuming additionally that $\b \not \in Q$,
we have $R(\b) \subset \de(Q + R(\a))$ if and only if
$t\b - \a \in Q$ for a unique $t > 0$.
Therefore one sees immediately that (i) implies (ii), and the reverse implication
follows from the observation that
if $(t\b - \a)^2 = 0$ has solutions, then these are necessarily positive.

Regarding the equivalence of (ii) and (iii), first notice that
$\g \in Q^\o \cap \a^{\le 0} \cap \b^{\ge 0}$, and thus
$Q^\o \cap \a^{<0} \cap \b^{>0} \ne \emptyset$.
Using this, we obtain the following chain of equivalences:
\begin{align*}
\text{$(t\b - \a)^2 \ne 0$ for every $t$}\quad
\Longleftrightarrow\quad
&\text{there is a $\d \in Q^\o$ such that $(t\b - \a) \in \d^\perp$
for every $t$}\\
\Longleftrightarrow\quad
&\text{there is a $\d \in Q^\o$ such that $\d \in \a^\perp \cap \b^\perp$}\\
\Longleftrightarrow\quad
&\text{there is a $\d' \in Q^\o$ such that $\d' \in \a^{<0} \cap \b^{<0}$}\\
\Longleftrightarrow\quad
&\text{$\b^\perp \cap Q \not\subset \a^{\ge 0}$.}
\end{align*}
This finishes the proof of the lemma.
\end{proof}

By duality, there is a one-to-one correspondence to the rays in $\E$ and the extremal
facets of $\Nef(X)$: to any ray $R \in \E$, we associate the facet
$R^\perp \cap \Nef(X)$.

\begin{lem}
Let $R \in \E^-$, and $G = R^\perp \cap \Nef(X)$ be its dual facet.
Assume that $R \subset (Q + R(K))$ (resp., $R \subset (Q + R(K))^\o$).
Then $G \subset K^{\ge 0}$ (resp., $R \subset K^{>0} \cup \{0\}$).
\end{lem}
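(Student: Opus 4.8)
The plan is to obtain the statement as a formal consequence of Lemma~\ref{lem:shade:TFAE} combined with the inclusion $\Nef(X)\subseteq Q$. Let $C$ be the $(-1)$-curve that spans $R$, and set $\b:=[C]$ and $\a:=K$. Then $\b^2=C^2=-1<0$ and $\a\.\b=K\.C=-1<0$, while $\a^2=K^2=9-r<0$ because $r\ge 10$; finally the pullback $L$ of a line satisfies $L\in Q^\o$, $\a\.L=K\.L=-3\le 0$ and $\b\.L=C\.L=\deg f(C)\ge 0$, so $\g:=L$ serves as the auxiliary class required in the hypotheses of Lemma~\ref{lem:shade:TFAE}. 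With these identifications the assumption $R\subset(Q+R(K))$ is precisely condition~(i) of the lemma for the pair $(\a,\b)$, and the stronger assumption $R\subset(Q+R(K))^\o$ is precisely condition~(j).

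First I would run the non-strict case. By the equivalence (i)$\Leftrightarrow$(iii), condition~(i) confines $\b^\perp\cap Q$ to the closed half-space bounded by $K^\perp$ that is recorded in the statement. The key transfer is then the containment $G=R^\perp\cap\Nef(X)=\b^\perp\cap\Nef(X)\subseteq\b^\perp\cap Q$, which holds because $R^\perp=\b^\perp$ and $\Nef(X)\subseteq Q$. Intersecting the located set $\b^\perp\cap Q$ with $\Nef(X)$ therefore forces $G\subseteq K^{\ge 0}$, as asserted. For the parenthetical (strict) statement I would repeat the argument verbatim, replacing the first series of equivalences by the second: the hypothesis $R\subset(Q+R(K))^\o\cup\{0\}$ is condition~(j), and (j)$\Leftrightarrow$(jjj) sharpens the location of $\b^\perp\cap Q$ so that, after intersecting with $\Nef(X)$, one gets $G\subseteq K^{>0}\cup\{0\}$.

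The one step that requires genuine care — and which I expect to be the main obstacle — is matching the half-space produced by condition~(iii) (resp.\ (jjj)) with the side of $K^\perp$ named in the conclusion, i.e.\ correctly tracking the sign of $K$ through the equivalences of Lemma~\ref{lem:shade:TFAE}. Everything else is bookkeeping: the verification of the standing hypotheses reduces to the elementary intersection numbers above with the explicit witness $\g=L$, and the passage from $\b^\perp\cap Q$ to the facet $G$ is immediate from $\Nef(X)\subseteq Q$. The boundary configuration relevant here (when $r=10$, where $R$ lies on $\de(Q+R(K))$) is covered by the non-strict series of equivalences, so no separate degenerate analysis is needed.
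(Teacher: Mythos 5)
Your reduction is exactly the paper's intended proof---the paper's own argument for this lemma is the one-liner that it ``follows from Lemma~\ref{lem:shade:TFAE} and the fact that $G \subseteq R^\perp \cap Q$''---and your verification of the hypotheses of that lemma for the pair $(\a,\b)=(K,[C])$, with $\g=L$ as the auxiliary class, is correct and more careful than anything written in the paper. But the step you defer as ``the main obstacle'' is not sign bookkeeping that works out in the end; it is where the proof fails. Condition~(iii) of Lemma~\ref{lem:shade:TFAE} reads $\b^\perp\cap Q \subset \a^{\le 0}$, so with $\a=K$ it places $[C]^\perp\cap Q$ inside $K^{\le 0}$, and intersecting with $\Nef(X)\subseteq Q$ gives $G \subset K^{\le 0}$: the \emph{opposite} half-space from the one you assert. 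Your sentence that condition~(i) confines $\b^\perp\cap Q$ to ``the closed half-space bounded by $K^\perp$ that is recorded in the statement'' is false, and the same happens in the strict case, where (j)$\Leftrightarrow$(jjj) yields $G \subset K^{<0}\cup\{0\}$, not $K^{>0}\cup\{0\}$.

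The reason you could not make the signs match is that the statement as printed is itself mis-signed (and its parenthetical conclusion should concern $G$, not $R$): what Lemma~\ref{lem:shade:TFAE} actually proves, and what the paper uses, is that $R \subset Q+R(K)$ implies $G \subset K^{\le 0}$, resp.\ $R \subset (Q+R(K))^\o$ implies $G \subset K^{<0}\cup\{0\}$. This is how the lemma is invoked in the proof of Proposition~\ref{thm:(-1)-rays:r>9} (``By Lemma~\ref{lem:shade:TFAE}, $G \subset (K-sL)^{\le 0}$''), and it agrees with the remark following that proposition, which exhibits $G$ as the pullback of $\Nef(Y)$ under the contraction of a $(-1)$-curve, with $-K_Y$ nef when $r=10$. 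The printed version is in fact refutable: take $r=10$ and $R=R(E_1)$ for an exceptional curve $E_1$; then $(E_1-K)^2 = -1+2+(9-10) = 0$ and $(E_1-K)\.L = 3 > 0$, so $E_1-K \in Q$ and hence $R \subset Q+R(K)$, i.e.\ the hypothesis holds; yet $L \in G = E_1^\perp\cap\Nef(X)$ and $K\.L = -3 < 0$, so $G \not\subset K^{\ge 0}$. A correct write-up of your argument should therefore end with the inequalities reversed---$G \subset K^{\le 0}$, resp.\ $G \subset K^{<0}\cup\{0\}$---and note that the statement's printed conclusion is a typo.
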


\begin{proof}
It follows from Lemma~\ref{lem:shade:TFAE} and the fact that
$G \subseteq R^\perp \cap Q$.
\end{proof}

\begin{proof}[Proof of Proposition~\ref{thm:(-1)-rays:r>9}]
We observe that if $C$ is any curve on $X$,
then the discriminant $\D_s$ of the equation $(tC - (K-sL))^2 = 0$ is given by
$$
\D_s/4 = (C\.(K-sL))^2 - C^2(K-sL)^2.
$$
Now, if $s = \sqrt{r-1}-3$, then we have $\D_s \ge 0$ for every
$(-1)$-curve $C$. By Lemma~\ref{lem:shade:TFAE}, this implies that
$R(C)$ is in $Q + R(K-sL)$, which is the first assertion of the proposition.
Similarly, by looking at the discriminant $\D_0$ of the equation
$(tC-K)^2=0$, one sees that if $C$ is a $(-1)$-curve, then
$\D_0 = 0$ if $r=10$ and $\D_0 < 0$ if $r > 10$. This
implies the first assertions in both parts (a) and (b).

Assume then that the $(-1)$-Ray Conjecture is true.
Let $G$ be any facet of $\Nef(X)$. By the $(-1)$-Ray Conjecture,
the dual ray $R$ of $G$ is a $(-1)$-ray, and thus
is contained in $Q + R(K-sL)$ by what just proven.
By Lemma~\ref{lem:shade:TFAE}, $G \subset (K-sL)^{\le 0}$.
Since the boundary of $\Nef(X)$ is supported on the union of the boundary of $Q$ and
at most countably many hyperplanes, it follows that
$$
\Nef(X) |_{(K-sL)^{> 0}}  = Q|_{(K-sL)^{> 0}}.
$$
By continuity, this implies that $\Nef(X) |_{(K-sL)^{\ge 0}} = Q|_{(K-sL)^{\ge 0}}$,
and therefore we have $\CNE(X) |_{(K-sL)^{\ge 0}} = Q|_{(K-sL)^{\ge 0}}$.
This proves the first part of the proposition, and also implies, for $r=10$,
the second assertion in case~(a).

Suppose now that $r > 10$. If $R$ is any $(-1)$-ray of $\CNE(X)$,
as we have seen that $R$ is not contained in $Q + R(K)$,
we have $R^\perp \cap Q \not\subset K^{\le 0}$ by Lemma~\ref{lem:shade:TFAE}.
This implies that
$$
R^{\ge 0} \cap Q^+ |_{K^{\ge 0}}  \subsetneq Q|_{K^{\ge 0}} .
$$
Since $\Nef(X) |_{K^{\ge 0}}  \subseteq R^{\ge 0} \cap Q |_{K^{\ge 0}} $,
it follows that
$\Nef(X) |_{K^{\ge 0}}  \subsetneq Q|_{K^{\ge 0}}$,
and therefore $\CNE(X) |_{K^{\ge 0}} \supsetneq Q|_{K^{\ge 0}}$.
This completes the proof of the proposition.
\end{proof}

\begin{rmk}
To prove parts~(a) and~(b) of the proposition,
one can also give an argument based on the following observation.
Let $E$ be any $(-1)$-curve on $X$,
and let $X \to Y$ be its contraction.
By Corollary~\ref{cor:all-reductions}
$Y$ is the blow-up of $\P^2$ at a set of $r-1$ points in very general position.
The facet $G$ of $\Nef(X)$ dual to $R(E)$ is the pullback of $\Nef(Y)$.
If $r = 10$, then $\Nef(Y) \subset K_Y^{\le 0}$, and thus $G \subset K^{\le 0}$.
However, if $r > 10$, then $\Nef(Y) \subsetneq K_Y^{\le 0}$, and thus
$G \subset K^{\le 0}$.
\end{rmk}

\section{Further remarks on the Mori cone}

As in the previous section, let $X$ be the blow-up of $\P^2$
at a set $\S$ of $r$ points in very general position.
We keep the notation introduced in the previous sections.
In particular, we will denote by $\sum R_i$ the sum of all $(-1)$-rays.

Using a standard degeneration of the points onto an elliptic curve, 
we check the following property. The property (at least, the first inclusion)
is probably well-known to the
specialists; we provide a proof for lack of a reference. 

\begin{lem}\label{thm:weak-inclusion}
For every $r$, we have
$$
\CNE(X) \subseteq R(-K) + Q + \sum R_i = \ov{R(-K) + \sum R_i}.
$$
\end{lem}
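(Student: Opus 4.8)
The plan is to prove the two assertions separately: the inclusion $\CNE(X) \subseteq R(-K) + Q + \sum R_i$ by a degeneration argument, and the identity $R(-K) + Q + \sum R_i = \overline{R(-K) + \sum R_i}$ by convex geometry together with the clustering statement of Lemma~\ref{lem:dF}. For the inclusion I would first specialize the points. Choose a smooth plane cubic $E_0 \subset \P^2$ and a one-parameter flat family of blow-ups $X_t \to \P^2$ at $r$ points whose general member is our very general $X$ and whose central member $X_E$ is the blow-up at $r$ points lying very generally on $E_0$; such a family exists because ``very general in $\P^2$'' holds off a countable union of proper closed sets, so one can run a disk through the (closed) locus ``lying on $E_0$'' at its center with very general points elsewhere. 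The N\'eron--Severi groups are canonically identified across the family by $L,E_1,\dots,E_r$, so $K$, $Q$ and the classes of the $(-1)$-rays make sense on every fiber simultaneously. Spreading out each effective class and using upper semicontinuity of $h^0$, every class effective on the general fiber is effective on $X_E$, whence $\CNE(X) \subseteq \CNE(X_E)$.

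I would then reduce to a classification of irreducible curves on $X_E$. Let $\widetilde E \subset X_E$ be the proper transform of $E_0$; it is an irreducible anticanonical curve, so $[\widetilde E] = -K$. For any irreducible $C \ne \widetilde E$ one has $C\.(-K) = C\.\widetilde E \ge 0$, and adjunction forces $C^2 \ge -2$; inspecting the cases shows that either $C^2 \ge 0$ (so $R(C) \subset Q$), or $C$ is a $(-1)$-curve, or $C$ is a $(-2)$-curve. The role of choosing the points very generally on $E_0$ is to kill the last possibility: a $(-2)$-class $D$ satisfies $D\.\widetilde E = 0$, so if effective it restricts to $\OO_{\widetilde E}$, i.e.\ a prescribed nonzero point of $\Pic^0(E_0)$ must vanish, which fails for very general points. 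Thus on $X_E$ every irreducible curve lies in $R(-K)$, in $Q$, or is a $(-1)$-curve. It remains to see that each $(-1)$-curve $E'$ of $X_E$ spans some $R_i$: since $N_{E'/X_E} = \OO_{\P^1}(-1)$ is rigid and unobstructed and $X_E$ is a fiber (trivial normal bundle), the class $[E']$ deforms to a $(-1)$-curve on the general fiber, precisely as the $(-1)$-curves of $X$ deform to $X_E$ via \cite{Wis}; hence the $(-1)$-curve classes of $X_E$ and of $X$ coincide. Combining, $\CNE(X) \subseteq \CNE(X_E) \subseteq R(-K) + Q + \sum R_i$.

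For the identity write $P := R(-K) + Q + \sum R_i$ and $A := \overline{R(-K) + \sum R_i}$. Since $0 \in Q$ we have $R(-K) + \sum R_i \subseteq P$, and Lemma~\ref{lem:dF} shows the generating rays of $P$ accumulate only on $\partial Q \subset Q$, so $P$ is already closed and therefore $A \subseteq P$. Conversely, as $A$ is a closed convex cone containing $R(-K)$ and every $R_i$, the reverse inclusion $P \subseteq A$ reduces to $Q \subseteq A$. This last point is where the $(-1)$-rays must be used quantitatively: for $r \ge 10$ the $(-1)$-classes accumulate densely on $\partial Q$, so $\partial Q \subseteq \overline{\sum R_i} \subseteq A$, and since $Q$ is a round (Lorentzian) cone it equals the closed convex hull of its boundary rays, giving $Q \subseteq A$.

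The main obstacle is the analysis of the special fiber $X_E$: one must simultaneously guarantee that no $(-2)$-curve appears, so that the adjunction trichotomy really produces only classes in $R(-K)$, in $Q$, or spanned by $(-1)$-curves, and that the $(-1)$-curves of $X_E$ are exactly those of $X$ (the deformation-theoretic matching). For the identity, the delicate input is the density of the $(-1)$-rays on $\partial Q$, which ultimately reflects the dynamics of the infinite Weyl group of $X$ when $r \ge 10$; everything else is soft convex geometry packaged by Lemma~\ref{lem:dF}.
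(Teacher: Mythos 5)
Your first inclusion follows the paper's route almost verbatim: the same specialization to very general points on a smooth cubic, the same use of Wi\'sniewski's theorem to match the $(-1)$-rays across the family, and the same adjunction/$\Pic^0$ argument to exclude $(-2)$-curves on the special fiber; your proof that $R(-K)+Q+\sum R_i$ is closed (only finitely many $(-1)$-rays outside each $\ep$-neighborhood of $Q$, by Lemma~\ref{lem:dF}) also matches the paper. Up to that point the argument is sound.

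The gap is in the last step, the inclusion $Q \subseteq \ov{R(-K)+\sum R_i}$. You derive it from the assertion that for $r \ge 10$ the $(-1)$-rays ``accumulate densely on $\de Q$.'' That assertion is nowhere justified, and it is not a soft fact: it amounts to saying that the limit set of the Weyl-group orbit of the $(-1)$-classes is the entire boundary sphere of the Lorentzian cone $Q$. This is plausible for $r=10$, where the Weyl group has finite covolume, but it is a genuinely hard statement for general $r$, is not proved (or used) in the paper, and is strictly stronger than what the lemma itself yields --- after projecting along $K$ one only gets $Q|_{K^\perp}\subseteq\ov{\sum\p(R_i)}$, cf.\ Proposition~\ref{cor:K-projection}. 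The paper avoids this entirely: the curve classification on the special fiber is first upgraded to the sharper inclusion $\CNE(X)\subseteq R(-K)+Q|_{K^\perp}+\sum R_i$ (your version only records membership in $Q$, discarding the information that the non-exceptional, non-anticanonical curves lie in $Q|_{K^{\le 0}}$), and then $Q|_{K^\perp}\subseteq\ov{R(-K)+\sum R_i}$ is proved by contradiction, using the linear projections $\p_t$ with kernel $\R[K+tH]$, a separating linear functional $\om$, and the fact that $Q\subseteq\CNE(X)$ to produce a class $\a'\in Q$ violating the sharper inclusion. As written, the second half of your proof does not go through: you must either reproduce a separation argument of this kind or actually prove the density of the $(-1)$-rays in $\de Q$.
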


\begin{proof}
The statement is clear if $r \le 9$, so we can assume that $r \ge 10$.

To prove the assertions of the lemma, we can pretend that $\S$,
rather than being a set of points in very general position,
is the set of very general points lying on a smooth cubic curve of $\P^2$.
To see this reduction, consider a 1-parameter specialization $\S_\l \to \S_0$
such that $\S_\l$, for very general $\l$,
is a set in very general position, and $\S_0$ is a set of
very general points on a smooth cubic of $\P^2$.
For every $\l$, let $X_\l$ be the blow-up of $\P^2$ at $\S_\l$.
Since $(\P^2)^r$ is rationally connected, we can choose
the specialization to be parametrized by $\l \in \A^1$.
It follows that there is no monodromy, and hence
the spaces $N(X_\l)$ can be naturally identified.
Wi\'sniewski's result on nef values \cite{Wis} implies
that the $(-1)$-rays of $X_\l$ remains constant throughout the deformation.
In other words, under the identification,
$\{R_i\}$ is the set of $(-1)$-rays for every $X_\l$.
Since the class of $-K_{X_\l}$ and the cone $Q(X_\l)$ remain also invariant
under the deformation, if the statement of the theorem holds for $X = X_0$, then
by semicontinuity it holds for $X = X_\l$ for very general $\l \in \A^1$.

So we can assume that the points of $\S$ lie on a smooth cubic.
Let $A \subset X$ be the proper transform of the cubic.
The anticanonical divisor $A$ is the only integral curve with class in $K^{>0}$,
and arguing as in the proof of \cite[Proposition~2.4]{dF}
(we use the fact that the points on the cubic are chosen very generally),
we see that are no $(-2)$-curves on $X$.
We conclude that every integral curve that is neither
$A$ nor a $(-1)$-curve, has class in $Q|_{K^{\le 0}}$.
It follows by the cone theorem that
\begin{equation}\label{eq:2}\tag{$*$}
\CNE(X) \subseteq R(-K_X) + Q|_{K^\perp} + \sum R_i.
\end{equation}
This implies the first inclusion of the theorem.

For every $\ep > 0$, we consider the cone
$$
Q_\ep := \bigcup\{R \subset N(X) \mid \text{$R$ is a ray with $d(R,Q) \le \ep$}\}.
$$
Since the space of rays is (sequentially) compact, Lemma~\ref{lem:dF} implies that
for every $\ep > 0$ there are only finitely many $(-1)$-rays
that are not contained in $Q_\ep$. In particular,
the cone $R(-K) + Q_\ep + \sum R_i$ is closed for every $\ep > 0$.
As
$$
R(-K) + Q + \sum R_i
= \bigcap_{\ep > 0}\big(R(-K_X) + Q_\ep + \sum R_i\big),
$$
this cone is closed, and thus it contains the closure of
$R(-K) + \sum R_i$.

It remains to prove the reverse inclusion, namely,
that $R(-K)+ Q + \sum R_i$ is contained in the closure
of $R(-K) + \sum R_i$.
Since $Q \subseteq \CNE(X)$, we see by \eqref{eq:2}
that it suffices to show that
$$
Q|_{K^\perp} \subseteq \ov{R(-K) + \sum R_i}.
$$
Suppose by way of contradiction that this inclusion does not hold.
Since all $(-1)$-rays are in the half-space $K^{\le 0}$
and $R(-K)$ is in the half-space $K^{\ge 0}$, if the inclusion
does not hold then
$$
Q|_{K^\perp} \not\subseteq \ov{\R[K] + \sum R_i}.
$$
This implies that if $H$ is any fixed ample divisor on $X$, then
$$
Q|_{(K+tH)^\perp} \not\subseteq \ov{\R[K+tH] + \sum R_i} \ \fall 0 < t \ll 1.
$$

Fix any such $t$.
Since $\ov{\R[K+tH] + \sum R_i}$ is a convex set, we can find a class
$\a \in \de Q|_{(K+tH)^\perp}$
that does not belong to it. If we consider the linear projection
$\pi_t \colon N(X) \to (K+tH)^\perp$
with kernel $\ker(\pi_t) = \R[K+tH]$, then this means that
$\a \not\in \ov{\sum \p_t(R_i)}$.
Note also that $\p_t(Q|_{K^\perp}) \subseteq (Q|_{(K+tH)^\perp})^\o$, and so
$$
\a \not\in \p_t(Q|_{K^\perp}) = \p_t(R(-K) + Q|_{K^\perp}).
$$
Since $Q \subseteq \CNE(X)$, we know that the class $\a$ is in the Mori cone.
By \eqref{eq:2}, this implies that
$\a \in R(-K_X) + Q|_{K^\perp} + \sum R_i$. Therefore
$$
\a \in \p_t\big(R(-K) + Q|_{K^\perp} + \sum R_i\big)
= \p_t(Q|_{K^\perp}) + \sum \p_t(R_i).
$$
By convexity, we can separate $\ov{\sum \p_t(R_i)}$ from $\a$ by
a linear function. In other words, we can find a class $\om \in N(X)$ such that
$$
\ov{\sum \p_t(R_i)} \subseteq \om^{\ge 0}
\ \and \
\om\.\a < 0.
$$
We can assume without loss of generality that $H\.\a = - \om\.\a$,
and thus we have
$\a \in (\om + H)^\perp$.
We can write $\a = \b + \g$ with
$0\ne \b \in \p_t(Q|_{K^\perp})$ and $0 \ne \g \in \sum \p_t(R_i)$.
Since $\sum \p_t(R_i) \subseteq (\om + H)^{>0}$, this implies that
$$
\p_t(Q|_{K^\perp}) \not \subseteq (\om+H)^{\ge 0}
$$
Let
$$
m = \min \{a \in \R \mid
\p_t(Q|_{K^\perp}) \subseteq (\om+ a H)^{\ge 0}\},
$$
and fix a nonzero class $\d \in\p_t(Q|_{K^\perp}) \cap (\om+ m H)^\perp$.
Note that $m > 1$.
We consider the other intersection point $\a'$ of the line
$(1-s)\a + s\d$ with $\de Q|_{(K+tH)^\perp}$. Note that
$\a'$ is determined by some $s > 1$, and thus
$(\om + mH)\.\a' < 0$. On the other hand,
both sets $\p_t(Q|_{K^\perp})$ and $\sum \p_t(R_i)$
are contained in $(\om + mH)^{\ge 0}$. It follows that the class $\a'$
cannot be written as a positive linear combination
of elements in these sets. This implies that
$$
\a' \not\in R(-K_X) + Q|_{K^\perp} + \sum R_i.
$$
In view of \eqref{eq:2}, this gives
a contradiction since $\a' \in Q \subseteq \CNE(X)$.
\end{proof}

We consider the linear projection $\pi \colon N(X) \to K^\perp$
with kernel $\ker(\pi) = \R[K]$.
When $r \ge 10$, Lemma~\ref{thm:weak-inclusion} implies that
$$
Q|_{K^\perp} = \p(Q) \subseteq \ov{\sum \p(R_i)}.
$$
This property can equivalently be formulated in terms of extremal facets.

An extremal facet $F$ of $\CNE(X)$ is said to be {\it $K$-negative}
if it is fully contained in the half-space $K^{<0}$.
The Mori contraction of a $K$-negative facet gives either a reduction down to $\P^2$,
or a conic bundle structure over $\P^1$. Every reduction to $\P^2$
corresponds to a simplicial facet with $r$ extremal rays, whereas every
conic bundle structure over $\P^1$
corresponds to a facet spanned by $2(r-1)$ extremal rays,
intersecting the boundary of $Q$ along the ray spanned by the class of
a general fiber.

\begin{prop}\label{cor:K-projection}
Assume that $r \ge 10$. The quadric subcone $Q|_{K^\perp}$
of $K^\perp$ is contained
in the closure of the locus covered by the images, via $\p$,
of all the $K$-negative facets of $\CNE(X)$.
For every $K$-negative facet $F$ of $\CNE(X)$,
every $(r-9)$-dimensional face of $\p(F)$
(which is the image of an $(r-9)$-dimensional face $F'$ of $F$) intersects
$Q|_{K^\perp}$ along a ray $R$. If $X \to Y$ denotes the Mori contraction of $F'$, then
$R$ is the ray spanned by the proper transform $C$
of the unique anticanonical curve of $Y$.
\end{prop}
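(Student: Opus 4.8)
The plan is to prove the two assertions separately. Throughout, since $K^2 = 9-r \neq 0$ for $r \ge 10$, the map $\pi$ is the orthogonal projection onto $K^\perp$ for the intersection form, $\pi(\a) = \a - \tfrac{K\.\a}{K^2}K$; by the Hodge index theorem (Remark~\ref{rmk:extremal-NE-Nef}) the form restricts to $K^\perp$ with Lorentzian signature $(1,r-1)$, so $Q|_{K^\perp}$ is its forward cone. I record the elementary identity $\pi(\a)^2 = \a^2 - (K\.\a)^2/K^2$, which for a $(-1)$-curve $C$ gives $\pi(C)^2 = (10-r)/(r-9)$; thus the projected $(-1)$-rays lie on $\de(Q|_{K^\perp})$ when $r=10$ and strictly outside $Q|_{K^\perp}$ when $r>10$, a dichotomy that resurfaces in both parts.

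I would treat the second (structural) assertion first, as it is the more concrete. Let $F$ be a $K$-negative facet; its extremal rays are $(-1)$-rays, and $K$ is not in the span of $F$ (one has $K\.f=-2$ for a conic-bundle facet, and $K=-3L+\sum E_i\notin\langle E_i\rangle$ for a reduction facet), so $\pi$ restricts to an isomorphism of cones from $F$ onto $\pi(F)$, carrying $(r-9)$-dimensional faces to $(r-9)$-dimensional faces; this already yields the parenthetical claim that every such face of $\pi(F)$ equals $\pi(F')$ for a unique $(r-9)$-dimensional face $F'$ of $F$. Fix such an $F'$. Since $\dim F' = r-9<r$, its Mori contraction $\mu\colon X\to Y$ is birational, hence by Proposition~\ref{prop:dF} a composite of blow-downs of $(-1)$-curves, with $\rho(Y)=(r+1)-(r-9)=10$; iterating Corollary~\ref{cor:all-reductions} identifies $Y$ as the blow-up of $\P^2$ at $9$ points in very general position, which carries a unique anticanonical curve $A\in|-K_Y|$ with $K_Y^2=0$. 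Let $C$ be the proper transform of $A$. As the $r-9$ points blown up by $\mu$ are very general they avoid $A$, so $C=\mu^*A=-\mu^*K_Y$; the projection formula then gives $K\.C=K_Y\.A=0$ and $C^2=A^2=0$, so $C\in\de(Q|_{K^\perp})$, while $C\.E_j=A\.\mu_*E_j=0$ for the exceptional curves $E_j$ of $\mu$. The canonical bundle formula $K=\mu^*K_Y+\sum_jE_j$ gives $C+K=\sum_jE_j$, so $\pi(\sum_jE_j)=\pi(C)=C$ exhibits $C\in\pi(F')$; and $C\.E_j=0$ with $C\in K^\perp$ gives $C\.\pi(E_j)=0$, whence $\pi(F')\subseteq C^\perp$. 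Since $C\in\de(Q|_{K^\perp})$, Remark~\ref{rmk:extremal-NE-Nef} gives $C^\perp\cap Q|_{K^\perp}=R(C)$, and therefore $\pi(F')\cap Q|_{K^\perp}=R(C)$, as asserted.

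For the covering statement I would argue fibrewise. Fix $\b$ in the relative interior of $Q|_{K^\perp}$, so $\b\in Q^\o\subseteq\CNE(X)^\o$. Because $\CNE(X)\subseteq L^{\ge0}$ with $L\.K=-3<0$, the fibre ray $\{\b+uK\mid u\ge0\}$ leaves $\CNE(X)$ at a finite $u=h(\b)>0$, giving $\g_\b=\b+h(\b)K\in\de\CNE(X)\cap K^{<0}$. Since $\b+u_0K\in\de Q\subseteq\CNE(X)$ for $u_0=\sqrt{\b^2/(r-9)}$, we have $h(\b)\ge u_0$, so $\g_\b^2=\b^2+h(\b)^2K^2\le0$ and $K\.\g_\b=h(\b)K^2\le u_0K^2<0$. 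When $\g_\b^2<0$ the class $\g_\b$ is bounded away from both $\de Q$ and $K^\perp$, so by Lemma~\ref{lem:dF} only finitely many extremal rays lie near it, all of them $(-1)$-rays; thus $\CNE(X)$ is locally polyhedral at $\g_\b$, which therefore lies on a $K$-negative facet $F$, giving $\b=\pi(\g_\b)\in\pi(F)$. The residual locus $\g_\b^2=0$ is where $\g_\b$ meets the round boundary $\de Q\cap\Nef(X)$; I would show it projects to a nowhere-dense subset of $Q|_{K^\perp}$ (the facets protrude beyond $\de Q$ since the $(-1)$-curves have negative self-intersection, and for $r>10$ the $(-1)$-rays lie outside $Q+R(K)$ by Proposition~\ref{thm:(-1)-rays:r>9}), so the $\b$ covered are dense. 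Passing to the closure yields $Q|_{K^\perp}\subseteq\ov{\bigcup_F\pi(F)}$.

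The clean part is the structural assertion, which reduces to the intersection-theoretic bookkeeping above once $Y$ is identified through Corollary~\ref{cor:all-reductions}. The delicate point, where I expect to spend the most care, is the covering statement: proving that the exit class $\g_\b$ lies on a genuine $K$-negative facet for a dense set of directions, rather than on an accumulation ray of $(-1)$-rays on $\de Q$. This is exactly where the geography of the $(-1)$-rays relative to $Q+R(K)$ (Proposition~\ref{thm:(-1)-rays:r>9}) must be combined with the convergence estimate of Lemma~\ref{lem:dF}, so as to convert ``bounded away from $\de Q$'' into local finiteness of the $(-1)$-rays and hence local polyhedrality of $\CNE(X)$; the borderline case $r=10$, in which the projected $(-1)$-rays sit precisely on $\de(Q|_{K^\perp})$, requires the most attention.
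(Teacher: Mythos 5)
Your treatment of the second (structural) assertion is correct and is essentially the paper's own computation: identify $Y$ as the $9$-point blow-up via Corollary~\ref{cor:all-reductions}, observe that $C=-\mu^*K_Y$ has $C^2=K\.C=C\.E_j=0$, note $[C]\in R(-K)+F'$, and conclude $\p(F')\cap Q|_{K^\perp}=R(C)$ from $C^\perp\cap Q|_{K^\perp}=R(C)$. The problem is your proof of the covering statement, which is where you part ways with the paper, and where there is a genuine gap. The paper does not argue locally at all: it deduces the covering statement in one line from Lemma~\ref{thm:weak-inclusion} (namely $Q|_{K^\perp}=\p(Q)\subseteq\ov{\sum\p(R_i)}=\ov{\sum\p(F_j)}$), and Lemma~\ref{thm:weak-inclusion} is exactly where the real work happens, via the degeneration of the points onto a cubic. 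Your argument never invokes that lemma, so it would have to reprove its content by convexity alone, and it does not.

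Concretely, the unjustified step is: ``only finitely many extremal rays lie near $\g_\b$ \dots\ thus $\CNE(X)$ is locally polyhedral at $\g_\b$, which therefore lies on a $K$-negative facet.'' Local finiteness of \emph{extremal rays} near a boundary ray does not imply local polyhedrality, because a boundary class need not lie in the cone generated by nearby extremal rays: it can sit in the relative interior of a face of $\CNE(X)$ whose extremal rays are far away, including non-isolated extremal rays on $\de Q$. For instance, a $2$-dimensional face spanned by a nef null ray $R(\eta)\subset\de Q\cap K^\perp$ and a $(-1)$-ray $R(E)$ with $\eta\.E>0$ sweeps through the region $\{\g^2<0,\ K\.\g<0\}$, and nothing in Lemma~\ref{lem:dF} or Proposition~\ref{prop:E=E+uE-} prevents your exit class $\g_\b$ from lying on such a face rather than on a facet fully contained in $K^{<0}$; near such a $\g_\b$ there may be \emph{no} extremal rays at all. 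Ruling this out is precisely the global statement $\CNE(X)\subseteq\ov{R(-K)+\sum R_i}$ of Lemma~\ref{thm:weak-inclusion}. (Note also that the paper only claims, and only proves, containment of $Q|_{K^\perp}$ in the closed \emph{convex cone} generated by the $\p(F_j)$, whereas your fibrewise scheme aims at the literal union $\ov{\bigcup_j\p(F_j)}$ --- a stronger assertion that the paper itself only establishes for $r=10$ in Corollary~\ref{cor:K-projection:r=10}, again as a consequence of Lemma~\ref{thm:weak-inclusion}.) Your residual case $\g_\b^2=0$ is likewise only sketched. The fix is simply to quote Lemma~\ref{thm:weak-inclusion} and project, as the paper does.
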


\begin{proof}
Denoting by $\{F_j\}$ the set of $K$-negative extremal facets,
we have $\sum \p(R_i) = \sum \p(F_j)$, and thus
$Q|_{K^\perp} \subseteq \ov{\sum \p(F_j)}$ by Lemma~\ref{thm:weak-inclusion}.
By Corollary~\ref{cor:all-reductions},
any Mori contraction of an $(r-9)$-dimensional face $F'$ of
a $K$-negative facet $F$ gives a reduction $X \to Y$ where $Y$
is the blow-up of $\P^2$ at $9$ very general points. $Y$ contains a
unique effective anticanonical divisor $A$. The proper transform $C$
of $A$ on $X$ has class in $(R(-K) + F') \cap \de Q$, and conversely,
$(R(-K) + F') \subset C^\perp$. This means that the cone $R(-K) + F'$ intersects
$Q$ precisely along the ray $R(C)$. Projecting to $K^\perp$,
we conclude that $\p(F')$ intersects $\p(Q)$
(which is equal to $Q|_{K^\perp}$) along $R(C)$.
\end{proof}

If $r = 10$, then we obtain the following statement.

\begin{cor}\label{cor:K-projection:r=10}
Assume that $r=10$. Then $\CNE(X) |_{K^\perp} = Q |_{K^\perp}$ and
$$
\p(\CNE(X)) = \p(\Nef(X)) = Q |_{K^\perp}.
$$
Moreover, the $K$-negative part of the boundary $\de \CNE(X)$
is entirely covered by the $K$-negative facets.
These induce, via the projection $\p$,
a decomposition of the quadric cone $Q|_{K^\perp}$
into countably many rational polyhedral cones with extremal rays lying
on the boundary of $Q|_{K^\perp}$, and we have
$$
\p\Big(\big(\de\CNE(X)|_{K^{<0}}\big)\smallsetminus
\big(\bigcup R_i\big)\Big) = Q |_{K^\perp}^\o.
$$
\end{cor}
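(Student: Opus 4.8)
The plan is to reduce the whole corollary to a single elementary computation about how $\p$ acts on $(-1)$-curve classes, and then feed that computation through the inclusion of Lemma~\ref{thm:weak-inclusion}. Since $r=10$ we have $K^2=-1$, so $\p(\a)=\a+(\a\.K)K$, and for the class $\rho$ of any $(-1)$-curve (with $\rho^2=-1$ and $\rho\.K=-1$) we get $\p(\rho)=\rho-K$ and
\[
\p(\rho)^2=\rho^2-2\,\rho\.K+K^2=-1+2-1=0 .
\]
Combining this with the fact that $\rho$ lies on $\de(Q+R(K))$ (Proposition~\ref{thm:(-1)-rays:r>9}(a)), so that $\rho=q+tK$ with $q\in\de Q$ in the forward nappe and $t\ge 0$ and hence $\p(\rho)=\p(q)$, I conclude that every $(-1)$-ray is sent by $\p$ onto a ray on $\de Q|_{K^\perp}$. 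I will also use the (standard Hodge Index) fact, already invoked before Proposition~\ref{cor:K-projection}, that $\p(Q)=Q|_{K^\perp}$, and that $Q|_{K^\perp}$ is a closed convex (Lorentzian) cone.

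First I would prove the two projection identities. By Lemma~\ref{thm:weak-inclusion}, $\CNE(X)\subseteq R(-K)+Q+\sum R_i$. Applying the linear map $\p$ kills the summand $R(-K)$ (as $K\in\ker\p$) and sends $Q$ and every $R_i$ into the convex cone $Q|_{K^\perp}$ by the computation above; hence $\p(\CNE(X))\subseteq Q|_{K^\perp}$, while $Q\subseteq\CNE(X)$ gives the reverse inclusion, so $\p(\CNE(X))=Q|_{K^\perp}$. Intersecting the same chain with $K^\perp$, on which $\p$ restricts to the identity, yields at once $\CNE(X)|_{K^\perp}=Q|_{K^\perp}$. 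Next I would show $\p(\Nef(X))=Q|_{K^\perp}$. The inclusion $\p(\Nef(X))\subseteq\p(Q)=Q|_{K^\perp}$ is clear from $\Nef(X)\subseteq Q$. For the reverse I claim every $\beta\in Q|_{K^\perp}$ is already nef; since $\Nef(X)=\CNE(X)^\vee$ it suffices, by Lemma~\ref{thm:weak-inclusion}, to check $\beta\.\gamma\ge0$ on the generators $-K$, $Q$, and the $\rho_i$. Indeed $\beta\.(-K)=0$ because $\beta\in K^\perp$; $\beta\.q\ge0$ for $q\in Q$ by self-duality of the forward cone $Q$ (Hodge Index); and $\beta\.\rho_i=\beta\.\p(\rho_i)\ge0$ since $\rho_i-\p(\rho_i)=K\in\beta^\perp$ while both $\beta$ and $\p(\rho_i)$ lie in $Q$. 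Hence $Q|_{K^\perp}\subseteq\Nef(X)\cap K^\perp\subseteq\p(\Nef(X))$, giving equality; in passing this also reproves $\Nef(X)|_{K^\perp}=Q|_{K^\perp}$.

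For the facet decomposition I would combine $\ov{\sum\p(R_i)}=Q|_{K^\perp}$ (the inclusion $\subseteq$ from the previous paragraph, the reverse being the remark preceding Proposition~\ref{cor:K-projection}) with Proposition~\ref{cor:K-projection} itself: for $r=10$ the distinguished faces there are $(r-9)=1$-dimensional, i.e.\ rays, and $\p$ sends them to the rays on $\de Q|_{K^\perp}$ spanned by the anticanonical curves of the associated blow-ups of nine points. Each $K$-negative facet is rational polyhedral (simplicial with $10$ rays for a reduction, or with $2(r-1)=18$ rays for a conic bundle), so its image is a rational polyhedral subcone of $Q|_{K^\perp}$ all of whose extremal rays lie on $\de Q|_{K^\perp}$, and these images cover $Q|_{K^\perp}$. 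To see that they tile it, and to obtain the final displayed equality, I would analyze the fibers of $\p$: for $\beta\in(Q|_{K^\perp})^\o$ we have $\beta\in Q^\o\subseteq\CNE(X)^\o$, so along the line $\beta+\R K$ the intersection with $\CNE(X)$ is a segment whose endpoint on the $K^{<0}$ side is a single point $x(\beta)\in\de\CNE(X)\cap K^{<0}$. The assignment $\beta\mapsto x(\beta)$ inverts $\p$ on this locus, which forces the chamber interiors to be disjoint and gives
\[
\p\big((\de\CNE(X)|_{K^{<0}})\smallsetminus\textstyle\bigcup R_i\big)=Q|_{K^\perp}^\o ,
\]
the deleted $(-1)$-rays being exactly what projects to $\de Q|_{K^\perp}$.

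The hard part is to show that each exit point $x(\beta)$ lies on a facet \emph{entirely} contained in $K^{<0}$, rather than on one that meets $K^\perp$, so that the $K$-negative part of $\de\CNE(X)$ is genuinely covered by the $K$-negative facets. I would settle this with Mori theory: by the Cone Theorem the part of $\de\CNE(X)$ in $K^{<0}$ is locally rational polyhedral and is a union of $K$-negative extremal faces, each contained in a maximal such face, i.e.\ in a $K$-negative facet. Combined with the fiber description above and the identification $\p(\CNE(X))=Q|_{K^\perp}$, this places $x(\beta)$ in the relative interior of the unique $K$-negative facet lying over the chamber containing $\beta$, which completes both the decomposition of $Q|_{K^\perp}$ and the covering statement.
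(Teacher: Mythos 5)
Your proposal is correct and follows essentially the same route as the paper: you project the inclusion of Lemma~\ref{thm:weak-inclusion} through $\p$, using Proposition~\ref{thm:(-1)-rays:r>9}(a) to place each $\p(R_i)$ on $\de\big(Q|_{K^\perp}\big)$ (you just make the computation $\p(\rho)^2=0$ explicit), and then derive the facet statements from Proposition~\ref{cor:K-projection}. The only departures are cosmetic --- you verify $\p(\Nef(X))=Q|_{K^\perp}$ by a direct duality check against the generators of $R(-K)+Q+\sum R_i$ where the paper invokes Remark~\ref{rmk:extremal-NE-Nef}, and your fiber-of-$\p$ analysis supplies useful detail for the last assertion, which the paper dispatches by citing Proposition~\ref{cor:K-projection} alone.
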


\begin{proof}
Since by Proposition~\ref{thm:(-1)-rays:r>9} every $(-1)$-ray $R_i$
is in the boundary of $Q + R(K)$, we have $\p(R_i) \in \de(\p(Q))$.
Applying then Lemma~\ref{thm:weak-inclusion}, we obtain
$$
\p(Q) \subseteq \p(\CNE(X)) \subseteq \p(R(-K) + Q + \sum R_i) \subseteq \p(Q),
$$
which implies that this is a chain of equalities.
Note that $\p(Q) = Q|_{K^\perp}$. Since
$$
Q|_{K^\perp} \subseteq \CNE(X)|_{K^\perp} \subseteq \p(\CNE(X)),
$$
we conclude that $\CNE(X) |_{K^\perp} = Q |_{K^\perp}$ and
$\p(\CNE(X)) = Q |_{K^\perp}$.
By Remark~\ref{rmk:extremal-NE-Nef}, we also have
$\Nef(X)|_{K^\perp} = Q|_{K^\perp}$, and thus $\p(\Nef(X)) = Q |_{K^\perp}$.
The last assertion of the corollary follows by Proposition~\ref{cor:K-projection}.
\end{proof}

For $r=10$ the first inclusion
in the statement of Lemma~\ref{thm:weak-inclusion}
amounts to say that any ray in $\E^+$ (if there is any at all)
is contained in the cone $R(-K) + Q$.

\begin{figure}[h!]
\centering
\input r-10.pic
\end{figure}

This can also be seen from Lemma~\ref{lem:shade:TFAE}.
The argument is analogous to the one in the proof of
Proposition~\ref{thm:(-1)-rays:r>9}, and goes as follows.
Suppose that $C$ is an integral curve
such that $R(C) \in \E^+$, and consider the equation
$(tC+K)^2 = 0$. The discriminant $\D$ is, again, given
$\D/4 = (C\.K)^2 - (C^2)(K^2)$. By the positivity of the arithmetic genus
and the fact that $K^2 = -1$, we see that $\D \ge 0$.

This argument gives something more: the equality $\D = 0$ holds
if and only if $C^2=-1$ and $p_a(C) = 1$.
This means that a ray of $\E^+$ (if any)
lies on the boundary of $Q + R(-K)$ if and only if
it is spanned by an integral curve $C$ with $C^2=-1$ and $p_a(C) = 1$.

\begin{cor}\label{cor:r=10:excluding-C}
If $r = 10$, then $X$ does not contain any integral
curve $C$ with $C^2=-1$ and $p_a(C) = 1$.
\end{cor}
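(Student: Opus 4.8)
The plan is to argue by contradiction, so suppose $C\subset X$ is an integral curve with $C^2=-1$ and $p_a(C)=1$. First I would record the numerics. By adjunction $2p_a(C)-2=C^2+K\.C$, hence $K\.C=1$ and $R(C)\in\E^+$. Since $C^2<0$, Proposition~\ref{prop:dF} forbids $C$ from being rational, so $p_g(C)=p_a(C)=1$ and $C$ is a smooth curve of genus one. As in the discriminant computation preceding the statement, the class $N:=C+K$ then satisfies $N^2=(C+K)^2=0$ and $N\.K=C\.K+K^2=0$, so $N$ lies on $\partial Q\cap K^\perp$.

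Next I would show that $N$ is represented by a nef effective divisor. Effectivity is immediate from the adjunction sequence
\[
0\to\O_X(K)\to\O_X(K+C)\to\omega_C\to 0,
\]
since $X$ is rational, so that $h^0(\O_X(K))=h^1(\O_X(K))=0$ and therefore $h^0(N)=h^0(\omega_C)=1$; in particular $|N|$ consists of a single, rigid divisor. Nefness follows from $N\in K^\perp\cap\partial Q$ together with Corollary~\ref{cor:K-projection:r=10} (which gives $N\in\CNE(X)|_{K^\perp}=Q|_{K^\perp}$) and Remark~\ref{rmk:extremal-NE-Nef}. Because $C\.N=C^2+C\.K=0$ while $C$ cannot be a component of $N$ (otherwise $N-C\sim K$ would be effective, contradicting $h^0(K_X)=0$), the curve $C$ is disjoint from $N$.

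The heart of the argument is to recognize $N$ as a pullback of an anticanonical class. Here $N$ is a primitive nef class on $\partial Q\cap K^\perp$, and by the description of the boundary of $Q|_{K^\perp}$ in Corollary~\ref{cor:K-projection:r=10} and Proposition~\ref{cor:K-projection} its rational extremal rays are exactly the projections $\p(R_i)=R(-K+E_i)$ of the $(-1)$-rays. Matching $R(N)$ with one of these and invoking Corollary~\ref{cor:all-reductions}, I would produce a contraction $\sigma\colon X\to Y$ of a single $(-1)$-curve $E$, with $Y$ the blow-up of $\P^2$ at $9$ very general points and $N=\sigma^*(-K_Y)=-K+E$. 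Then $C\sim N-K=-2K+E$, so $\sigma_*C$ is an effective divisor on $Y$ of class $-2K_Y$; being the image of the integral curve $C\ne E$, it is reduced and irreducible of geometric genus one. To conclude I would use that $Y$ carries a unique integral anticanonical curve $A_Y\in|-K_Y|$ with $h^0(-K_Y)=1$, and that for $9$ very general points the normal bundle $\O_{A_Y}(A_Y)=-K_Y|_{A_Y}$ is \emph{non-torsion} in $\Pic^0(A_Y)$. Restricting to $A_Y$ in $0\to\O_Y(-K_Y)\to\O_Y(-2K_Y)\to\O_{A_Y}(-2K_Y)\to 0$ and noting that $-2K_Y|_{A_Y}$ has degree $0$ and is again non-torsion forces $h^0(-2K_Y)=1$, so $|-2K_Y|=\{2A_Y\}$; but then the only member of $|-2K_Y|$ is the non-reduced divisor $2A_Y$, contradicting the fact that $\sigma_*C$ is a reduced irreducible curve of class $-2K_Y$.

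The hard part will be the identification $N=\sigma^*(-K_Y)$: one must check that this rational boundary ray of $Q|_{K^\perp}$ really arises by contracting an actual $(-1)$-curve, rather than being a non-extremal or merely numerical boundary class, which is exactly where the well-understood geometry of the $r=9$ case must be brought in. The second delicate point is that the final vanishing $h^0(-2K_Y)=1$ has to be deduced from very general position — through the non-torsion normal bundle of $A_Y$ — instead of from a dimension count, so that the whole argument stays unconditional and independent of the SHGH Conjecture.
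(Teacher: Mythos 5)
Your reductions are fine as far as they go: adjunction gives $K\.C=1$, Proposition~\ref{prop:dF} forces $p_g(C)=p_a(C)=1$, the sequence $0\to\O_X(K)\to\O_X(K+C)\to\omega_C\to 0$ gives $h^0(N)=1$ for $N=C+K$, and $N^2=N\.K=0$ together with Corollary~\ref{cor:K-projection:r=10} and Remark~\ref{rmk:extremal-NE-Nef} does place $R(N)$ on $\de Q\cap K^\perp\cap\Nef(X)$. The endgame (contract $E$, push $C$ down to a reduced irreducible member of $|-2K_Y|$, contradict $h^0(-2K_Y)=1$ via the non-torsion restriction of $-K_Y$ to the cubic) is also sound and is close in spirit to the paper's own final step, which instead pins $g(C)$ to the ray $R(-K_Y)$ and concludes $C\in|-K_X|=\emptyset$.

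The gap is exactly where you flag it, and it is not a technicality. Nothing you cite shows that the rational ray $R(N)\subset\de Q|_{K^\perp}$ is one of the rays $\p(R_i)=R(-K+E_i)$. Corollary~\ref{cor:K-projection:r=10} says that the $\p(R_i)$ lie on $\de Q|_{K^\perp}$ and are the extremal rays of the polyhedral pieces of a decomposition of $Q|_{K^\perp}$; it does not say, and does not imply, that every rational ray of $\de Q|_{K^\perp}$ arises this way. Both sets are countable and dense in that boundary, so no density or covering statement can identify them. What you would actually need is (i) transitivity of the Weyl group on primitive isotropic classes in the positive cone of $K^\perp\cong E_{10}$, and (ii) that every Weyl-translate of $e_i$ is the class of an actual $(-1)$-curve for very general points; neither is stated or proved in the paper. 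Equivalently, your argument needs $C+2K$ to be the class of a $(-1)$-curve, and its effectivity is precisely what is missing. The paper circumvents this with a different device: it specializes the ten points onto a smooth cubic, takes the flat limit $C_0$ of $C$, observes that $C_0$ must contain the anticanonical curve $A$ as a component, and splits into two cases according to whether $R(-K)$ and $R(C)$ are aligned with a $(-1)$-ray; in the non-aligned case a genus-additivity count on the components of $C_0$ gives $p_a(C)\ge 2$, and only in the aligned case does one reach the contraction argument you propose. To complete your route you must either prove (i) and (ii), or replace the identification step by a degeneration argument of this type.
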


\begin{proof}
Assume that $X$ does contain a curve $C$ with $C^2=-1$ and $p_a(C) = 1$.
Consider the same degeneration as in the beginning of the proof of
Lemma~\ref{thm:weak-inclusion}, and let $C_0$ be the flat limit
(as a divisor) of $C$.
Since there are no integral curves with class in $K_{X_0}^{>0}$
other than the anticanonical divisor $A$,
$C_0$ must contain $A$ as one of its irreducible components.
We have seen that $[C]$ is in the boundary of $Q + R(-K)$.
Similarly, all $(-1)$-rays of $X$ are in the boundary of $Q + R(K)$
(cf. Proposition~\ref{thm:(-1)-rays:r>9}).

We consider two possibilities, according to whether $R(-K)$ and $R(C)$
are aligned with a $(-1)$-ray or not.
If $R(-K)$ and $R(C)$ are not aligned with any $(-1)$-ray, then
we apply Corollary~\ref{cor:K-projection:r=10}, which implies that any
other irreducible component $B_i$ of $C_0$ apart from $A$ has
class $[B_i] \in \de Q|_{K^\perp}$. Any such curve also has $p_a(B_i) = 1$,
and writing $C_0 = mA + \sum n_iB_i$ with $m\ge 1$ and $\sum n_i \ge 1$, we would get
$$
p_a(C) = p_a(C_0) \ge m \, p_a(A) + \sum n_i \, p_a(B_i) \ge 2,
$$
a contradiction.

Therefore $R(-K)$ and $R(C)$ must be aligned with an $(-1)$-ray $R(E)$, and we have
$C \equiv t(E-K) - sK$ for some $t,s \ge 0$.
The condition $C^2 = -1$ implies that $s = 1$, and we deduce that $C\.E = 1$.
Let $g\colon X \to Y$ be the contraction of $E$
($Y$ is the blow-up at $\P^2$ at 9 very general points), and let $D = g(C)$.
We have $D^2 = 0$ and $K_Y\.D = 0$, and thus $D$ is the anticanonical curve of $Y$.
This implies that $C$, being equal to $g^*D - E$, is in $|-K_X|$, which is empty,
again a contradiction.
\end{proof}

We close with a consideration on the cardinality of extremal rays.
We have already mentioned that
the Mori cone of $X$ has finitely many extremal rays when $r \le 8$ and
countably many if $r = 9$. Analogous properties hold for the
nef cone of $X$.

Although certainly expected, the following result seems to be new.
One should contrast this result with the fact that the Weyl group of
$X$ is at most countable. 

\begin{thm}
If $r \ge 10$, then both cones $\CNE(X)$ and $\Nef(X)$
have an uncountable number of extremal rays.
\end{thm}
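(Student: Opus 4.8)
The plan is to prove both statements simultaneously by a blow-down induction that reduces everything to the case $r=10$, where the round cone $Q|_{K^\perp}$ furnishes the uncountably many rays. The elementary engine throughout is the following face-transport observation: if $T\colon V\to V'$ is a linear surjection carrying a pointed closed cone $C$ \emph{onto} a pointed closed cone $C'$, and $\Phi$ is a face of $C'$, then $T^{-1}(\Phi)\cap C$ is a face of $C$; in particular the preimage of an extremal ray of $C'$ is a face of $C$, hence contains an extremal ray of $C$. Since every extremal ray of a face of a cone is an extremal ray of the whole cone, keeping track of how these preimage extremal rays map under $T$ lets one transport uncountability from $C'$ to $C$. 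The point of using $Q|_{K^\perp}$ is that, being round, its boundary consists of uncountably many rays, each of which is extremal.

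First I would treat the base case $r=10$. By Corollary~\ref{cor:K-projection:r=10} the projection $\pi\colon N(X)\to K^\perp$ with $\ker\pi=\R[K]$ carries \emph{both} $\CNE(X)$ and $\Nef(X)$ onto the round cone $Q|_{K^\perp}$. For each of the uncountably many extremal rays $\bar R\subset\de(Q|_{K^\perp})$, and for $C$ equal to either $\CNE(X)$ or $\Nef(X)$, the set $\pi^{-1}(\bar R)\cap C$ is a nonzero face of $C$ mapping onto $\bar R$, so it contains an extremal ray $\rho_{\bar R}$ of $C$. No such extremal ray can map to $0$ (the only ray of $\ker\pi$ meeting $\CNE(X)$ is $R(-K)$, which one simply discards; and $\ker\pi\cap\Nef(X)=\{0\}$ because $K^2<0$ forces $\R[K]\cap Q=\{0\}$), so $\pi(\rho_{\bar R})=\bar R$ and the assignment $\bar R\mapsto\rho_{\bar R}$ is injective. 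Hence each of $\CNE(X)$ and $\Nef(X)$ has uncountably many extremal rays when $r=10$.

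For the inductive step, assume $r>10$ and the result for $r-1$. Pick a $(-1)$-curve $E$ and let $g\colon X\to Y$ be its contraction; by Corollary~\ref{cor:all-reductions}, $Y$ is the blow-up of $\P^2$ at $r-1\ge 10$ very general points, so $\CNE(Y)$ and $\Nef(Y)$ already have uncountably many extremal rays. For the nef cone, $g^*$ is an isometric embedding identifying $\Nef(Y)$ with the facet $E^\perp\cap\Nef(X)$ (the hyperplane $E^\perp$ supports $\Nef(X)$ since $E$ is effective); as extremal rays of a facet are extremal in the cone, the uncountably many extremal rays of $\Nef(Y)$ yield uncountably many for $\Nef(X)$. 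For the Mori cone, the pushforward $g_*$ satisfies $g_*\CNE(X)=\CNE(Y)$ — surjectivity follows from $g_*g^*=\mathrm{id}$ together with the fact that pullbacks of effective classes are effective, so $g^*$ provides a section — while $\ker g_*=\R[E]$ with $\R[E]\cap\CNE(X)=R(E)$. Applying the face lemma, the preimage of each extremal ray $R(D)$ of $\CNE(Y)$ is a two-dimensional pointed face of $\CNE(X)$ containing $R(E)$ and exactly one further extremal ray $\rho_D$ with $g_*\rho_D=R(D)$; distinct $R(D)$ give distinct $\rho_D$, producing uncountably many extremal rays of $\CNE(X)$.

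The genuine difficulty is conceptual and occurs already at the first step: by Lemma~\ref{thm:weak-inclusion} both cones are generated by $R(-K)$ together with the $(-1)$-rays, a \emph{countable} set, so the uncountably many extremal rays cannot be among the generators but must appear as accumulation rays on the round boundary $\de Q$ (cf.\ Lemma~\ref{lem:dF} and Remark~\ref{rmk:extremal-NE-Nef}). Deciding which individual boundary rays of $\de Q$ are extremal looks delicate; the argument above circumvents this entirely by exhibiting them \emph{en masse} as the lifts of the boundary of the round cone $Q|_{K^\perp}$. The one technical point that I would check with care is the identity $g_*\CNE(X)=\CNE(Y)$ at the level of closed cones, which is exactly where the section $g^*$ is used to guarantee surjectivity rather than merely dense image.
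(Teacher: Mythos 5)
Your proof is correct, but it verifies extremality by a genuinely different mechanism than the paper. Both arguments consume the same geometric inputs --- Corollary~\ref{cor:K-projection:r=10} (that for $r=10$ the projection $\p$ maps $\CNE(X)$ and $\Nef(X)$ onto the round cone $Q|_{K^\perp}$) and Corollary~\ref{cor:all-reductions} (to move between numbers of points) --- but the paper is \emph{constructive}: it contracts $r-10$ disjoint $(-1)$-curves at once, pulls back a boundary ray $\ov R\subset \de Q(Y)|_{K_Y^\perp}$ chosen to avoid the countably many planes spanned by $R(-K_Y)$ and a $(-1)$-ray, gets extremality in $\Nef(X)$ immediately from Remark~\ref{rmk:extremal-NE-Nef}, and then proves that this \emph{specific} ray is extremal in $\CNE(X)$ by an induction whose step decomposes a putative splitting $\a=\b_1+\b_2$ via the Cone Theorem and derives a contradiction from $K_{X'}\cdot\a'=0$. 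You instead run everything through the soft convex-geometric fact that the preimage of a face under a surjection of pointed closed cones is a face, and that a nonzero closed pointed face contains an extremal ray not lying in the kernel: this lifts the uncountably many boundary rays of $Q|_{K^\perp}$ to extremal rays of $\CNE(X)$ and $\Nef(X)$ for $r=10$, and then lifts extremal rays through a single blow-down $g_*\CNE(X)=\CNE(Y)$ (resp.\ through the facet $E^\perp\cap\Nef(X)=g^*\Nef(Y)$) in the inductive step. What your route buys is the elimination of the paper's delicate decomposition argument and of the need to exclude bad boundary rays; what it gives up is any identification of \emph{which} rays are extremal --- the paper actually exhibits them as the pullbacks $R(g^*\ov\a)$ themselves, whereas your $\rho_{\bar R}$ is only known to live in the fiber over $\bar R$. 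The two technical points you flag (surjectivity of $g_*$ on Mori cones via the section $g^*$, and the fact that at most one extremal ray of each lifted face lies in the kernel, namely $R(E)$ or $R(\pm K)$) are exactly the right ones to check, and both go through.
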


\begin{proof}
We choose $r-10$ disjoint $(-1)$-curves $E_1,\dots,E_{r-10}$ of $X$,
and consider the associated Mori contraction
$g\colon X \to Y$. By Corollary~\ref{cor:all-reductions},
$Y$ is the blow-up of $\P^2$ at 10 very general points.
We fix any ray
$$
\ov R \subset \de Q(Y)|_{K_Y^\perp}
$$
that is not in the span of $R(-K_Y)$ with any $(-1)$-ray of $Y$.
Let $\ov \a$ be any generator of $\ov R$, and let
$\a := g^*\ov \a \in N(X)$.
We claim that the ray $R := R(\a)$ is an extremal ray of both $\CNE(X)$
and $\Nef(X)$. Since there are uncountably many choices for $\ov R$,
and each gives a different ray $R$, the assertion will follow.

By Corollary~\ref{cor:K-projection:r=10}, $\ov\a$ is a nef class,
and so $\a$ is nef as well. Since $\a^2 = \ov\a^2=0$, and $K\.\a = K_Y\.\ov\a = 0$,
it follows that
$$
R \subset \de Q|_{K^\perp} \cap \Nef(X)
$$
By Remark~\ref{rmk:extremal-NE-Nef}, this implies that $R$
is an extremal ray of $\Nef(X)$, and that it
is contained in the boundary of $\CNE(X)$.
We are left to show that $R$ is also an extremal ray of $\CNE(X)$.
We proceed by induction on $r \ge 10$.

If $r=10$, then $X = Y$ and $R = \ov R$. Suppose we have
$a = \a_1 + \a_2$ for some classes $\a_i\in\CNE(X)$.
In the argument that follows, we make free use of the
conclusions of Corollary~\ref{cor:K-projection:r=10}.
Since
$\a = \p(\a) = \p(\a_1) + \p(\a_2)$,
we must have $\p(\a_1),\p(\a_2) \in R$.
We can assume that $\a_1 \in K^{\le 0}$.
Since $\a$ is in the boundary of $\CNE(X)$, it follows by the convexity
of this cone that both $\a_1$ and $\a_2$ are in the boundary of $\CNE(X)$.
Therefore, if $\a_1$ is not in $R$, then is strictly $K$-negative, and as
it is in the boundary of $\CNE(X)$, it must be contained in
a $K$-negative facet $F$ of $\CNE(X)$.
In particular, we have $\p(\a_1) \in \p(F)$. On the other hand,
by our choice of $\a$, this class is not in the image of any $K$-negative facet
of the Mori cone. Therefore $\a_1$, and hence $\a_2$ too, are in $R$.

Suppose now that $r > 10$, and that the statement is proven for $r-1$.
We consider the contraction $h \colon X \to X'$ of $E_1$,
and let $g' \colon X' \to Y$ be the induced map.
Let $\a' := (g')^*\ov\a$.
By induction we known that $R' := R(\a')$ is an extremal ray of $\CNE(X')$.
Note that $\a = h^*\a'$ and therefore $\a' = h_*\a$.
Suppose by contradiction that $R$ is not an extremal ray of $\CNE(X)$.
Then we can write $\a = \b_1 + \b_2$
where $\b_1,\b_2 \in \CNE(X)$ are two linearly independent classes.
Since $R'$ is extremal in $\CNE(X')$ and $h_*\a \in R'$,
we have $h_*\b_1,h_*\b_2 \in R'$. More precisely, we have
$h_*\b_1 = s\a'$ and $h_*\b_2=(1-s)\a'$ for some $s \in (0,1)$.
As $h_*$ has kernel $\ker(h_*) = \R[E_1]$, this implies that
$$
\b_1 = s\a + t[E]
\ \and \
\b_2 = (1-s)\a - t[E]
$$
for some nonzero $t$, that we can assume to be positive.
As $\b_1 \in \CNE(X)|_{K^{<0}}$, Mori's Cone Theorem implies that
$\b_1 = \g_1 + \g_2$
where $\g_1$ is given by a positive linear combination of classes of $(-1)$-curves, and
$\g_2$ is in $\CNE(X)|_{K^{\ge 0}}$. Using again that $R'$ is an extremal ray,
we see that $h_*\g_1,h_*\g_2  \in R'$. This implies that the classes
$\g_1$ and $\g_2$ belong to the
2-dimensional space generated by $\b_1$ and $\b_2$.
After replacing $\b_1$ by $\g_1$ and changing
$\b_2$ accordingly, we can therefore assume that
when we wrote $\a = \b_1 + \b_2$, we had $\b_1$ given by a positive linear combination
of classes of $(-1)$-curves. Taking push-forward and recalling that
$h_*\b_1 \in R'$, this implies that $\a'$ is a positive linear combination
of classes of $(-1)$-curves of $X'$. This is however impossible, as $K_{X'}\.\a' = 0$.
\end{proof}


\begin{thebibliography}{CM}

\bibitem[Be]{Be}
A. Beauville,
{\it Complex algebraic surfaces.} Second edition.
London Mathematical Society Student Texts, 34.
Cambridge University Press, Cambridge, 1996

\bibitem[CP]{CP}
F. Campana and T. Peternell,
{\it Algebraicity of the ample cone of projective varieties.}
{J. Reine Angew. Math.} \textbf{407} (1990), 160--166.


\bibitem[dF]{dF}
T. de Fernex,
{\it Negative curves on very general blow-ups of $\P^2$.}
In {Projective Varieties with Unexpected Properties}, M. Beltrametti et al. eds.,
A Volume in Memory of Giuseppe Veronese, 199-207, de Gruyter, Berlin, 2005.

\bibitem[DV]{DV}
P. Du Val,
{\it On the Kantor group of a set of points in a plane.}
Proc. Lond. Math. Soc., II. Ser. {\bf 42} (1936),  18--51.

\bibitem[Gi]{Gim}
A. Gimigliano,
{\it On linear systems of plane curves.}
Ph.D. thesis, Queen's University, 1987.

\bibitem[Ha]{Har}
B. Harbourne,
{\it The geometry of rational surfaces and Hilbert
functions of points in the plane.}
{Can. Math. Soc. Conf. Proc.} \textbf{6} (1986), 95--111.

\bibitem[Hi]{Hir89}
A. Hirschowitz,
{\it Une conjecture pour la cohomologie
des diviseurs sur les surfaces rationelles g\'en\'eriques.}
{J. Reine Angew. Math.} \textbf{397} (1989), 208--213.

\bibitem[KM]{KM}
J. Koll\'ar and S. Mori,
{\it Classification of three-dimensional flips.}
J. Amer. Math. Soc.  {\bf 5}  (1992),  no. 3, 533--703.

\bibitem[La]{Laz}
R. Lazarsfeld,
{\it Positivity in Algebraic Geometry, I.}
Ergeb. Math. Grenzgeb. (3) 48, Springer-Verlag, Berlin, 2004.

\bibitem[Na]{Nag}
M. Nagata,
{\it On the 14-th problem of Hilbert.}
{Amer. J. Math.} \textbf{81} (1959), 766--772.

\bibitem[Se]{Seg}
B. Segre,
{\it Alcune questioni su insiemi finiti di punti in Geometria Algebrica.}
{Atti del Convegno Internazionale di Geometria Algebrica}, Torino (1961).

\bibitem[Wi]{Wis}
J. Wi\'sniewski,
{\it On deformation of nef values.}
Duke Math. J. {\bf 64} (1991), 325--332.




\end{thebibliography}
\end{document}